\newcommand{\sm}{\smallskip}
\newcommand{\ms}{\medskip}
\long\def\ignore#1\endignore{\relax}
\newtheorem{theorem}[subsection]{Theorem}
\newtheorem{proposition}[subsection]{Proposition}
\newtheorem{lemma}[subsection]{Lemma}
\newtheorem{Lemma}[subsection]{Lemma}
\newtheorem{corollary}[subsection]{Corollary}
\theoremstyle{definition}
\newtheorem{remark}[subsection]{Remark}
\numberwithin{equation}{section} \allowdisplaybreaks
\numberwithin{equation}{subsection}
\newcommand\inpr{(\cdot|\cdot)} 
\newcommand\lan{\langle} \newcommand\ran{\rangle}
\newcommand\ind{_{\rm ind}}
\newcommand\ts{\textstyle}
\newcommand\gr{{\rm gr}}
\newcommand\ch{\sp{\scriptscriptstyle\vee}}
\newcommand\co{\colon} 
\newcommand\ot{\otimes}
\newcommand\pa{\partial}
\newcommand\an{^{\rm an}}
\newcommand{\mn}{^{-1}}
\newcommand{\dbl}{[\!\![}
\newcommand{\dbr}{]\!\!]}
\newcommand{\Aut}{\operatorname{Aut}}
\DeclareMathOperator{\ad}{ad}
\newcommand{\Hom}{\operatorname{Hom}}
\newcommand{\pr}{\operatorname{pr}}
\newcommand{\End}{\operatorname{End}}
\newcommand{\Ctd}{\operatorname{Ctd}}
\newcommand{\SCDer}{\operatorname{SCDer}}
\newcommand{\Lie}{\operatorname{Lie}}
\newcommand{\EL}{{\operatorname{EL}}}
\newcommand{\GL}{{\operatorname{GL}}}
\newcommand{\Int}{{\operatorname{Int}}}
\newcommand{\Mat}{{\operatorname{M}}}
\newcommand{\Der}{\operatorname{Der}}
\newcommand{\Tr}{\operatorname{Tr}}
\DeclareMathOperator{\EAut}{EAut}
\DeclareMathOperator{\CDer}{CDer}
\DeclareMathOperator{\rmE}{E} 
\DeclareMathOperator{\ev}{ev}
\DeclareMathOperator{\EA}{EA}
\DeclareMathOperator{\IDer}{IDer}
\DeclareMathOperator{\IE}{IE}
\DeclareMathOperator{\Span}{span}
\DeclareMathOperator{\SDer}{SDer}
\DeclareMathOperator{\supp}{supp}
\def\gg{\mathfrak g}
\newcommand{\g}{\mathfrak g}
\newcommand\frh{\ensuremath{\mathfrak{h}}} \newcommand\h{\frh}
\newcommand\lsl{\ensuremath{\mathfrak{sl}}}
\newcommand\gl{\ensuremath{\mathfrak{gl}}}
\newcommand{\bbC}{{\mathbb C}}
\newcommand\NN{\mathbb{N}}
\newcommand\ZZ{\mathbb{Z}}
\def\al{\alpha}\def\alp{\al}
\def\be{\beta}
\def\ga{\gamma}
\def\lm{\lambda}
\def\la{\lm}
\def\si{\sigma}
\newcommand\De{\Delta}
\newcommand\eps{\varepsilon}
\newcommand\La{\Lambda} 
\newcommand\ta{\tau}
\newcommand\vphi{\varphi}
\newcommand\boldeps{{\boldsymbol{\varepsilon}}}
\newcommand\beps{\boldeps}
\newcommand{\euD}{\EuScript{D}}
\newcommand\euQ{\EuScript{Q}}
\newcommand{\euZ}{\EuScript{Z}}
\newcommand\scA{\mathcal{A}}
\newcommand\scB{\mathcal{B}}
\newcommand\scC{\mathcal{C}}
\newcommand\scD{\mathcal{D}}
\newcommand\scQ{\mathcal{Q}}
\newcommand\scZ{\euZ}
\def\bG{\text{\rm \bf G}}
\begin{document}

\title[]{Conjugacy of Cartan subalgebras in EALAs with a non-fgc centreless
core}

\author{V. Chernousov}
\address{Department of Mathematics, University of Alberta,
    Edmonton, Alberta T6G 2G1, Canada}
\thanks{ V. Chernousov was partially supported by the Canada Research
Chairs Program and an NSERC research grant} \email{chernous@math.ualberta.ca}

\author{E. Neher}
\address{Department of Mathematics and Statistics, University of Ottawa,
    Ottawa, Ontario K1N 6N5, Canada}
\thanks{E.~Neher was partially supported by a Discovery grant from NSERC}
\email{neher@uottawa.ca}

\author{A. Pianzola}
\address{Department of Mathematics, University of Alberta,
    Edmonton, Alberta T6G 2G1, Canada.
    \newline
 \indent Centro de Altos Estudios en Ciencia Exactas, Avenida de Mayo 866, (1084) Buenos Aires, Argentina.}

\subjclass[2010]{17B67; (secondary) 16S36, 17B40}

\keywords{Extended affine Lie algebras, Lie torus, conjugacy, Cartan
subalgebras, quantum torus, special linear Lie algebra}

\thanks{A. Pianzola wishes to thank NSERC and CONICET for their
continuous support}\email{a.pianzola@gmail.com}

\thanks{The second author wishes to thank the Department of Mathematical Sciences
at the University of Alberta for hospitality during part of the work on this
paper.}

\begin{abstract}  We establish the conjugacy of Cartan subalgebras for
extended affine Lie algebras whose centreless core is ``of type A'', i.e.,
$\ell \times \ell$  matrices over a quantum torus $\scQ$ whose trace
lies in the commutator space of $\scQ$. This settles the last outstanding
part of the conjugacy problem for Extended Affine Lie Algebras that remained
open.
\end{abstract}

\maketitle

\noindent \hfill {\em  Dedicated to E. B. Vinberg on the occasion of his 80th
birthday } \medskip

\section*{Introduction}

This work is the last of a series of papers \cite{CGP, CNP, CNPY} devoted to
proving the Conjugacy Theorem for Extended Affine Lie Algebras: \sm

{\bf Conjugacy Theorem.} {\it Let $(E,H)$ and $(E,H')$ be two extended affine
Lie algebras, both defined on the same underlying Lie algebra $E$  over an
algebraically closed field of characteristic $0$. Then there exists an
automorphism $f$ of $E$ such that $f(H) = H'$.} \ms

Conjugacy has been established for all but one family of EALAs, and it is this
remaining case that our paper settles. Below we give a brief historical account
of the ``Conjugacy problem".

Let $\gg$ be a finite-dimensional split simple Lie algebra over a field $k$ of
characteristic $0$, and let $\bG$ be the simply connected Chevalley-Demazure
algebraic group associated to $\gg$. Chevalley's theorem (\cite[VIII, \S3.3,
Cor. de la Prop.~10]{Bbk}) asserts that all split Cartan subalgebras $\h$ of
$\gg$ are conjugate under the adjoint action of $\bG(k)$ on $\gg.$ This is one
of the central results of classical Lie theory. One of its immediate
consequences is that the corresponding root system is an invariant of the Lie
algebra (i.e., it does not depend on the choice of Cartan subalgebra).

We now look at the analogous question in the infinite dimensional set up as it
relates to extended affine Lie algebras (EALAs for short). Even if the field
$k$ is assumed to be algebraically closed, the reader should keep in mind that
our results are more akin to the setting of Chevalley's theorem for general $k$
than to conjugacy of Cartan subalgebras in finite-dimensional simple Lie
algebras over algebraically closed fields. The role of $(\gg, \h)$ is now
played by a pair $(E,H)$ consisting of a Lie algebra $E$ and a ``Cartan
subalgebra" $H$. There are other Cartan subalgebras in $E$, and the question is
whether they are conjugate and, if so, under the action of which group.

The first example is that of untwisted affine Kac-Moody Lie algebras. Let $R =
k[t^{\pm 1}]$. Then
\begin{equation}\label{KacMoody}
E = \gg \ot_k R  \oplus kc \oplus kd
\end{equation}
and \begin{equation}\label{cartan}
H = \mathfrak{h} \ot 1 \oplus kc \oplus kd.
\end{equation}

The relevant information is as follows. The $k$-Lie algebra $\gg \ot_k R \oplus kc$ is a central extension (in fact the universal central extension) of the $k$-Lie algebra $\gg \ot_k R$. The derivation $d$ of $\gg \ot_k R$ corresponds to the degree derivation $t d/dt$ acting on $R$. Finally $\mathfrak{h}$ is a fixed Cartan subalgebra of $\gg.$ The nature of $H$ is that it is abelian, it acts $k$-diagonalizably on $E$, and it is maximal with respect to these properties. Correspondingly, these subalgebras are called MADs (Maximal Abelian Diagonalizable) subalgebras. A celebrated theorem of Peterson and Kac \cite{PK} states that all MADs of $E$ are conjugate (under the action of a group that they construct which is the analogue of the simply connected group in the finite-dimensional case). Similar results hold for the twisted affine Lie algebras. These algebras are of the form
$$ E = L \oplus kc \oplus kd. $$
The Lie algebra $L$ is a loop algebra $L = L(\gg, \si)$  for some finite order
automorphism $\si$ of $\gg$ (see \cite{K} for details). If $\si$ is the
identity, we are in the untwisted case. The ring $R$ can be recovered as the
centroid of $L$.

Extended affine Lie algebras can be thought of as multi-variable
generalizations of finite-dimensional simple Lie algebras and affine Kac-Moody
algebras. For example, taking $R=k[t_1^{\pm 1}, \ldots, t_n^{\pm 1}]$ in
\eqref{KacMoody} and increasing $kc$ and $kd$ correspondingly in the obvious
way leads to toroidal algebras, an important class of examples of EALAs. But as
is already the case for affine Kac-Moody algebras, there are many interesting
examples of EALAs where $\g\ot_k R$ is replaced by a more general algebra, a
so-called Lie torus (see \ref{def:lietor}).

In the EALA set up, the Lie algebras $\gg$ as above are the case of nullity $n
= 0$, while the affine Lie algebras are the case of nullity $n = 1$. In higher
nullity $n$ we have $R = k[t_1^{\pm 1},\ldots,t_m^{\pm 1}]$ for some $m \leq
n,$ where again $R$ is the centroid of the centreless core $E_{cc}$ of the
given EALA. The theory of EALAs divides naturally into two cases:

(a) $m = n$.  In this case $E_{cc}$ is a module of finite type over the
centroid $R.$ It is refereed to as the ``fgc case" (short for finitely
generated over the centroid). If $k$ is algebraically closed,\footnote{See Remark \ref{fieldnature} below} the $R$-Lie algebra $E_{cc}$ is a multiloop algebra based on a (unique) $\gg$ as above. In particular $E_{cc}$ is twisted form of
$\gg \otimes_k R.$ This fact allows the
powerful methods of descent theory and reductive group schemes to be used.
Conjugacy at the level of $E_{cc}$ was established in \cite{CGP}. The lift of
this conjugacy theorem from $E_{cc}$ to $E$ is the main result of
\cite{CNPY}.

(b) $m < n.$  This is the so-called non-fgc case. Now $E_{cc}$ is not a module
of finite type over its centroid and $E_{cc}$ is not a twisted form of $\gg
\otimes_k R.$ The non-abelian Galois cohomology methods used in (a) are not
available. Fortunately, in the non-fgc case the nature of $E_{cc}$ is fully
understood. Indeed $E_{cc} = \lsl_\ell(\scQ)$ for some quantum torus  $\scQ$
and positive integer $\ell$ (see below for details). Conjugacy at the level of
$E_{cc}$ was established in \cite{CNP} by means of a ``specialization" trick of
its own interest. The main result of the present paper is the lift of conjugacy
for $E_{cc}$ to $E$ in the non-fgc case. This completes the proof that
``Conjugacy of Cartan subalgebras" holds for all EALAs.

The canonical procedure that associates to an EALA $E$ its core $E_c$ and
centreless core $E_{cc}=E_c / \scZ(E_c)$ can be reversed in the sense that one
can re-construct $E$ from its centreless core $E_{cc}$ by a special type of a
$2$-fold extension (in this paper we generalize this to so-called ``interlaced
extensions''). Moreover, going from $E$ to $E_{cc}$ is also a well-behaved
procedure at the level of the Cartan subalgebras: Consider $H_c = H \cap E_c$
and let $\pi \co E_c \to E_{cc}$ be the canonical map, then $H_{cc}= \pi(H_c)$
and the analogously defined $H'_{cc}$ are special types of MADs in $E_{cc}$.
Even more, every automorphism $f$ of $E$ leaves $E_c$ and hence also
$\scZ(E_c)$ invariant and so gives rise to an automorphism $f_{cc}$ of
$E_{cc}$. Thus, if our Main Theorem holds, then necessarily there exists some
automorphism $f_{cc} \in \Aut_k(E_{cc})$ such that $f_{cc} (H_{cc}) = H'_{cc}$.
From  this perspective, our approach of proving conjugacy ``upstairs'' on the
EALA level is the most natural one: we want to show that

\begin{enumerate}[(A)]
  \item there exists $f_{cc} \in \Aut_k(E_{cc})$ satisfying $f_{cc}
      (H_{cc}) = H'_{cc}$, and

 \item the automorphism $f_{cc}$ of (A) can be ``lifted'' to an
     automorphism $f$ of $E$ such that $f(H) = H'$.

\end{enumerate} \sm

Problem (A) has been solved in the two papers \cite{CGP} (the fgc case) and
\cite{CNP} (the non-fgc case).\sm

This leaves us with problem (B). Its difficulty lies in the fact
that a lift $f\in \Aut(E)$ of $f_{cc}$ (if it exists at all) will not
necessarily satisfy $f(H) = H'$. However, for any EALA $(E, H)$ and
automorphism $f$ of $E$ it is easily seen that $\big(E,f(H)\big)$ is an EALA
which satisfies $\big(f(H))_{cc} = f_{cc}(H_{cc})$. We can therefore split a
solution of problem (B) into two steps:

\begin{enumerate}   \item[(B1)] (\cite[Thm.~7.1]{CNPY}) If $H_{cc} =
H'_{cc}$ then there exists $f\in \Aut_k(E)$ such that $f(H) = H'$.

\item[(B2)] The automorphism used to solve problem (A) can be lifted to an
    automorphism of $E$.
\end{enumerate}
\sm

We have solved Problem (B2) and thus established the Conjugacy Theorem for
extended affine Lie algebras in the fgc case in \cite[Thm.~7.6]{CNPY}.  Thus
the Conjugacy Theorem for extended affine Lie algebras is reduced to proving
(B2) in the non-fgc case. \sm

As explained in \ref{lietor-prop}\eqref{lietor-prop-d}, 
in the non-fgc case  $E_{cc} \simeq \lsl_\ell(\scQ)$ for some  $\ell \ge 2$,
and $\scQ$ a quantum torus which is not finitely generated over its centre. But
as in \cite{CNP} we will deal here with the Lie algebra $ \lsl_\ell(\scQ)$ for
an {\it arbitrary} quantum torus $\scQ.$\footnote{Assuming that $Q$ is not-fgc
would not simplify our arguments. The additional generality may be of future
independent interest.}  The conjugacy theorem of \cite{CNP} for
$L=\lsl_\ell(\scQ)$, i.e., the solution of Problem (A) in the non-fgc case,
uses an interior automorphism $\Int(g)$ for some $g\in \GL_\ell(\scQ)$. The
final step in the proof of the Conjugacy Theorem for EALAs is therefore that
such $g$ can be suitably chosen. More precisely. \sm

{\bf Main Theorem.} {\em Let $L=\lsl_\ell(\scQ)$, $\ell\ge 2$ with $\scQ$ a
quantum torus, then Problem\/ {\rm (A)}  can be solved with a $g\in
\GL_\ell(\scQ)$ such that  $\Int(g)$ can be lifted to an automorphism of any
extended affine Lie algebra $E$ with $E_{cc} = L$.} \sm

The somewhat curious formulation of our result refers to the fact that we are
not claiming that all automorphisms $\Int(g)$ can be lifted to the EALA
level.\sm

\begin{remark}\label{fieldnature} A word on the nature of our base field $k$.
 The solution of Problem (A) in the fgc case (\cite{CGP}) assumes $k$ algebraically closed
(and of course of characteristic $0$). The reason for this assumption is the
Realization Theorem of  \cite{abfp2}. More precisely, \cite{CGP} holds as long
as one knows that $E_{cc}$ is a multiloop algebra, while \cite{abfp2} shows
that this holds in the fgc case under the assumption that $k$ be algebraically
closed.

In the non-fgc case $k$ there is no need not to assume that $k$ be algebraically closed to solve
problem (A)  (see \cite{CNP}). The lifting result (B1) works for
any field of characteristic $0$. In the remainder of this paper we will assume
that our base field $k$ has characteristic $0$, but need not be algebraically
closed. It is in this setting that we will prove our Main Theorem in the non-fgc case, namely the Conjugacy Theorem for EALAs with a non-fgc centreless core.
\end{remark}
\bigskip

{\em Notation.} For elements $g,h$ of a group $G$ we denote by $\dbl g,h \dbr =
ghg^{-1} h^{-1}$ the commutator of $g$ and $h$, and by $\scD(G)= (G,G)$ the
commutator subgroup of $G$. As usual $\Int(g)(h) = g h g\mn$. We use $D<L$ to
indicate that $D$ is a subalgebra of the algebra $L$. For any (associative or
Lie) algebra $A$ we denote by $\Der_k(A)$ the Lie algebra of $k$-linear
derivations of $A$, and by $\scZ(A)$ its centre.
\bigskip

\section{Interlaced extensions} \label{sec:rev}

 In this section we introduce a general construction of Lie algebras,
so-called interlaced extensions. We will see in \S\ref{sec:review} that
extended affine Lie algebras are examples of interlaced extensions. In addition,
one of the principal components of our proof of the Main Theorem can and will
be done in the setting of interlaced extensions (Theorem~\ref{etap}).

\subsection{Cocycles}\label{cohom} Let $L$ be a Lie algebra and let $V$ be an
$L$-module. A {\em $2$-cocycle with coefficients in $V$} is an  alternating
map $\si\co L \times L \to V$ satisfying for $l_i \in L$
\begin{equation}\label{cohomo1}
\begin{split}
  & l_1 \cdot \si(l_2, l_3) + l_2 \cdot \si(l_3, l_1) + l_3 \cdot \si(l_1, l_2)
  \\ &\qquad = \si([l_1, l_2], l_3) + \si([l_2, l_3], l_1) + \si([l_3, l_1], l_2) \end{split}  \end{equation}
Given such a $2$-cocycle $\si$, the vector space $L \oplus V$ becomes a Lie
algebra with respect to the product
\[  [l_1 + v_1, \, l_2  + v_2] = [l_1,
l_2]_L + \big( l_1 \cdot v_2 - l_2 \cdot v_1 + \si(l_1, l_2) \big).
\]
We will denote this Lie algebra by $L \oplus_\si V$. Note that the projection
onto the first factor $\pr_L \co L \oplus_\si V \to L$ is an epimorphism of
Lie algebras whose kernel is the abelian ideal $V$. We refer to such an
extension as an {\em abelian extension}. \sm

A special case of this construction is the situation when $V$ is a trivial
$L$-module. In this case a $2$-cocycle will be called a {\em central
$2$-cocycle\/}. Note that all terms on the left hand side of \eqref{cohomo1}
vanish. For a central $2$-cocycle, $\pr_L \co L \oplus_\si V \to L$ is an
epimorphism whose kernel $V$ is the central ideal $V$ of $L \oplus_\si V$,
i.e., $\pr_L$ is a {\em central extension}. \sm

A basic construction of a central $2$-cocycle goes as follows. We assume that
$\be$ is a bilinear form on $L$ which is {\em invariant\/} in the sense that
$\be([l_1, l_2], l_3) = \be(l_1, [l_2, l_3])$ holds for all $l_i \in L$. We
denote by
\[
   \SDer_{k,\beta}(L)
\]
(or simply $\SDer_{k}(L)$ if $\beta$ is fixed within our context) the subalgebra of $\Der_{k}(L)$ consisting of {\em skew derivations}, i.e.,
those derivations $d$ satisfying $\be \big(d (l) , l\big) = 0$ for all $l\in
L$. We further suppose that $D$ is a Lie algebra acting on $L$ by skew
derivations.
It is well-known and easy to check that
\begin{equation}\label{eala-cons0}
\si_{D,\beta} \co L \times L   \to D^*:= \Hom_k(D, k),
\quad \si_{D,\be}\big( l_1, \, l_2)\, (d) = \be \big( d(l_1) , \, l_2)\end{equation} is a central
$2$-cocycle.

\subsection{Interlaced extensions.}\label{gen-data}
As we explained in the introduction, one of the main problems solved in this
paper is lifting an automorphism from the centreless core $\lsl_\ell(Q)$ of an
EALA $E$ to $E$. We will see that this can be done without additional work
 in a more general setting than extended affine Lie algebras. By working
on this more general edifice not only do we strip the lifting process from
unnecessary assumptions, but  we also  suggest the possibility of
recasting EALA theory in a more general cadre. In this subsection we will
introduce this general framework. It uses the following data:\sm

\begin{enumerate}[(i)]
  \item\label{gen-datai} a Lie algebra $L$ equipped with an invariant
      bilinear form $\beta$; \sm

 \item \label{gen-dataii} a Lie algebra $D$ acting on $L$ by skew
     derivations of $(L, \beta)$; we write this action as $d\cdot l$ or
     sometimes $d(l)$ for $d\in D$ and $l\in L$; \sm

  \item \label{gen-dataiii} a subspace $C\subset D^*$ which is invariant
      under the co-adjoint action of $D$ on $D^*$, defined by $(d\cdot
      c)(d') = c([d', d])$, and satisfies
\begin{equation}
  \label{gen-dataiii1} \si_{D, \be} (l_1, l_2) \in C \quad (l_i \in L)
\end{equation}
for $\si_{D, \be}$ as in \eqref{eala-cons0}; \sm

 \item\label{gen-dataiv} a $2$-cocycle $\ta \co D \times D \to C$.
\end{enumerate}
\sm

\noindent Given these data, we define a product on the vector space
\begin{equation} \label{gen-data1}
 E= L \oplus C \oplus D
\end{equation}
by ($l_i \in L$, $c_i \in C$ and $d_i \in D$)
\begin{equation} \label{n:gencons3}
\begin{split}
  & [ l_1 + c_1 + d_1, \, l_2 + c_2 + d_2 ]
   = \big( [l_1, l_2]_L + d_1 \cdot l_2 - d_2 \cdot l_1 \big)
   \\&\qquad  \oplus
     \big(\si_{D,\be}(l_1, l_2) +d_1 \cdot c_2 - d_2 \cdot c_1
      + \ta(d_1, d_2) \big)
    \oplus  [d_1, d_2]_D.
 \end{split} \end{equation}
In this formula $[.,.]_L$ and $[.,.]_D$ denote the Lie algebra products of
$L$ and $D$ respectively. We use $\oplus$ on the right hand side of
\eqref{n:gencons3} as a mnemonic device to indicate the components of the
product with respect to the decomposition \eqref{gen-data1}. To avoid any
possible confusion we will sometimes indicate the product of $E$ by
$[.,.]_E$. We often abbreviate $\si = \si_{D,\be}$. \sm

Our construction is a special case of \cite[1.4]{CNPY}. Thus, by
\cite[1.5]{CNPY}, the vector space $E$ together with the product
\eqref{n:gencons3} is a Lie algebra. Since it is obtained by interlacing the
central extension $ 0 \to C \to  L \oplus C \to L \to 0$ (obvious maps) with
the abelian extension $0 \to C \to C \oplus D \to D \to 0$ (again obvious maps)
we call this Lie algebra the {\em interlaced extension given by the data
$(L,\be, D,C, \tau)$\/} and denote it $\IE(L,D,C)$ or $\IE(L,\be, D,C, \tau)$
if more precision is helpful.

Later on the bilinear form $\be$ on $L$ will be unique, up to a scalar in
$k^\times$. In general, we have for $s\in k^\times$
\begin{equation}
  \si_{D, s\be} = s\si_{D,\be} \quad\hbox{and} \quad
    \IE(L, \be, D, C, \ta) \simeq \IE(L, s\be, D, C, s\ta)
\end{equation}
via the isomorphism $l \oplus c \oplus d \mapsto l \oplus sc \oplus d$.

\begin{Lemma}\label{autchar}\footnote{ This lemma holds in the more general
setting of \cite[1.4]{CNPY}. But we have no use for this generality.} Let $E=
\IE(L,D,C)= L \oplus C \oplus D$ be an interlaced extension, and let $f \co E
\to E$ be a linear map of the form
\begin{equation} \label{autchar0}
 f(l \oplus c \oplus  d) = \big(f_L(l) + \eta(d) \big) \oplus
     \big( \psi(l) + c + \vphi(d) \big) \oplus d
\end{equation}
where $l\in L$, $c\in C$, $d\in D$ and
\begin{equation} \label{autchar00}
 f_L \co L \to L, \quad \eta\co D \to L, \quad \psi \co L \to C, \quad
  \vphi \co D \to C
\end{equation}
are linear maps. Then $f$ is an automorphism of the Lie algebra $E$ if and
only if the following conditions hold for all $l,l_1, l_2 \in L$ and $d,d_1,
d_2 \in D$. \ms
\begin{enumerate}[\rm (a)]
 \item\label{autochar1} $f_L$ is an automorphism of the Lie algebra $L$,
     \ms

\item \label{autochar2} $\si\big(f_L(l_1), f_L(l_2)\big) = \psi\big( [l_1,
    l_2]\big) + \si(l_1, l_2)$ for $\si = \si_{D,\be}$, \ms

\item \label{autochar3} $f_L( d\cdot l)  = [\eta(d), f_L(l)]_L + d\cdot
    f_L(l)$, \ms

\item \label{autochar4} $\psi(d\cdot l) = \si\big( \eta(d), f_L(l)\big) +
    d\cdot \psi(l)$, \ms

\item \label{autochar5} $\eta\big( [d_1, d_2]_D\big) = [ \eta(d_1), \,
    \eta(d_2) ]_L + d_1 \cdot  \eta(d_2) - d_2 \cdot \eta(d_1)$, \ms

\item \label{autochar6} $\vphi\big( [d_1, d_2]\big) = \si\big( \eta(d_1),
    \, \eta(d_2) \big) + d_1 \cdot \vphi(d_2) - d_2 \cdot \vphi(d_1)$.

\end{enumerate}
\end{Lemma}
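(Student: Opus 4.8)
The plan is to reduce the statement to a component-by-component comparison of the two sides of the homomorphism identity $f([x_1,x_2]_E) = [f(x_1),f(x_2)]_E$ with respect to the decomposition $E = L \oplus C \oplus D$, after first disposing of bijectivity separately from multiplicativity. Writing $f$ as a block matrix in the coordinates $(l,c,d)$, its $L$-, $C$- and $D$-rows are $(f_L,0,\eta)$, $(\psi,\id_C,\vphi)$ and $(0,0,\id_D)$; a short computation of kernel and image then shows that $f$ is bijective if and only if $f_L$ is. Hence, granted that $f$ is a Lie homomorphism, the ``automorphism'' clause of (a) amounts to $f_L$ being a homomorphism together with this bijectivity: in the forward direction bijectivity of $f$ forces that of $f_L$, and in the backward direction (a) supplies it. This isolates the genuine content — that $f$ respects the bracket.

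For multiplicativity I would fix $x_i = l_i \oplus c_i \oplus d_i$ and expand both $f([x_1,x_2]_E)$ and $[f(x_1),f(x_2)]_E$ via \eqref{n:gencons3} and \eqref{autchar0}, then equate the three components. The $D$-components are both $[d_1,d_2]_D$ and agree automatically, while on the $C$-component the only terms carrying $c_1,c_2$ are $d_1\cdot c_2 - d_2\cdot c_1$ on each side, which cancel; thus no condition involves $c_i$, consistent with $f$ acting as the identity on the $C$-slot. The remaining content sits in the $L$- and $C$-components, each bilinear in $x_1$ and $x_2$. Separating the identity according to the number of $D$-entries appearing — equivalently, evaluating on elements with selected components set to zero — the purely-$L$ part of the $L$-component gives that $f_L$ preserves the bracket, its mixed $L$-$D$ part gives (c), and its purely-$D$ part gives (e); likewise the three parts of the $C$-component give (b), (d) and (f).

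The verification is routine but lengthy, and the only genuine subtlety lies in matching the mixed $L$-$D$ terms. To extract the individual relations I would evaluate the homomorphism identity on elements with selected components set to zero: for instance $l_1 = 0$, $d_2 = 0$ isolates the $(d_1,l_2)$-interaction and yields precisely (c) from the $L$-component and (d) from the $C$-component, so no antisymmetry is needed for the forward implication. For the converse, reassembling the full mixed terms from (c) and (d) requires the alternating property of $\si = \si_{D,\be}$ together with the skew-symmetry of the Lie bracket of $L$, so that the two halves indexed by $(d_1,l_2)$ and $(d_2,l_1)$ combine with the correct relative sign; the cocycle $\ta$ plays no role beyond appearing identically as $\ta(d_1,d_2)$ on both sides and cancelling. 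Since every reduction step is reversible, assembling the six relations — with the bijectivity remark above upgrading the $L$-homomorphism relation to the full statement (a) — establishes both implications at once.
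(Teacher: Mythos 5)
Your proposal is correct and follows essentially the same route as the paper's (much terser) proof: reduce bijectivity of $f$ to bijectivity of $f_L$ via the triangular block structure, and obtain (a)--(f) by expanding the homomorphism identity on the brackets $[l_1,l_2]_E$, $[d,l]_E$, $[d_1,d_2]_E$ and comparing components, with the $c$-terms and $\ta(d_1,d_2)$ cancelling. Your added remarks — that no condition involves the $c_i$, and that the converse uses the alternating property of $\si$ and skew-symmetry of the bracket to merge the $(d_1,l_2)$ and $(d_2,l_1)$ terms — are accurate refinements of what the paper leaves implicit.
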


\begin{proof}
The map $f$ is bijective if and only if $f_L$ is so. Moreover, the definition
of the product of $E$ in \eqref{n:gencons3} and the definition of $f$ in
\eqref{autchar0} show that $f$ is a homomorphism of the Lie algebra $E$ if
and only if it respects the products $[l_1, l_2]_E$, $[d,l]_E$ and $[d_1,
d_2]_E$. This leads to the conditions \eqref{autochar1}--\eqref{autochar6}.
\end{proof}
\sm

We will call an automorphism of type \eqref{autchar0} a {\em special
automorphism\/}.  Not all automorphisms of $E$ are special, but we have the following result.

\begin{proposition} \label{elemlift}
{\rm (\cite[Prop.~1.6]{CNPY})} Let $E=\IE(L,D,C)$ be an interlaced extension.
Every elementary automorphism of $L$ lifts to a special automorphism of $E$.
\end{proposition}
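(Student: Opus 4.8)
The plan is to realize an explicit special lift of each generator of $\EAut(L)$ as an \emph{inner} automorphism of $E$, and then to promote this to arbitrary elementary automorphisms by a purely group-theoretic argument. Recall that the elementary automorphisms of $L$ are, by definition, the products of exponentials $\exp(\ad_L x)$ with $x\in L$ ad-nilpotent; since these generate $\EAut(L)$, it suffices to lift each such $\exp(\ad_L x)$ to a special automorphism and to check that the lifts can be multiplied.

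First I would view $x$ as the element $x\oplus 0\oplus 0$ of $E$ and compute its inner derivation directly from \eqref{n:gencons3}, obtaining
\[
  \ad_E(x)\,(l\oplus c\oplus d) = \big([x,l]_L - d\cdot x\big)\,\oplus\, \si(x,l)\,\oplus\, 0 .
\]
Thus, with respect to the decomposition $E=L\oplus C\oplus D$, the operator $\ad_E(x)$ annihilates $C$, carries $D$ into $L$ via $d\mapsto -d\cdot x$, and carries $L$ into $L\oplus C$ with $L$-component $\ad_L(x)$ and $C$-component $l\mapsto \si(x,l)$, the latter lying in $C$ by \eqref{gen-dataiii1}. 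Because the image of $\ad_E(x)$ has vanishing $D$-component and the $C$-component never feeds back into the dynamics, an elementary induction on this block-triangular shape shows that $(\ad_E x)^{N+2}=0$ as soon as $(\ad_L x)^N=0$. Hence $\ad_E(x)$ is a nilpotent inner derivation of $E$, and $f_x:=\exp(\ad_E x)$ is a well-defined automorphism of $E$ (the exponential of a nilpotent derivation being an automorphism).

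Next I would verify that $f_x$ is special and restricts correctly. Since $\ad_E(x)$ kills $C$ we get $f_x|_C=\id_C$, and since the image of $\ad_E(x)$ has zero $D$-component we get $\pr_D\circ f_x=\pr_D$; these two properties are precisely equivalent to $f_x$ having the shape \eqref{autchar0}, so $f_x$ is special. Reading off the $L$-component of $f_x$ on $L$, the formula above gives $\sum_k\frac1{k!}(\ad_L x)^k=\exp(\ad_L x)$, so $(f_x)_L=\exp(\ad_L x)$ is exactly the generator we set out to lift.

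Finally I would assemble the generators via a subgroup argument. The special automorphisms form a subgroup $\mathcal{S}\le\Aut(E)$: they are characterized by the two conditions $f|_C=\id_C$ and $\pr_D\circ f=\pr_D$ (indeed $\pr_D\circ f=\pr_D$ forces the $D$-row of $f$ to be $(0,0,\id_D)$, while $f|_C=\id_C$ forces the $C$-column to be $(0,\id_C,0)$), and each condition is visibly stable under composition and inversion. A short computation with \eqref{autchar0} then shows that $f\mapsto f_L$ is a group homomorphism $\mathcal{S}\to\Aut(L)$ with $(f\circ g)_L=f_L\circ g_L$. Its image is therefore a subgroup of $\Aut(L)$ containing every $\exp(\ad_L x)$ by the previous step, hence containing all of $\EAut(L)$; so every elementary automorphism of $L$ lifts to a special automorphism of $E$. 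The only point requiring genuine care is the nilpotency of $\ad_E(x)$, i.e.\ that the extension data do not destroy ad-nilpotency, but the block-triangular structure makes this automatic; the rest is formal.
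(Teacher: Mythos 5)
Your proof is correct and follows essentially the same route as the paper: the lift of $\exp(\ad_L x)$ is the inner automorphism $\exp(\ad_E x)$ of $E$ (well defined since $\ad_E x$ inherits (local) nilpotency from $\ad_L x$ by the block-triangular structure), and projecting its components onto $L$ and $C$ reproduces exactly the formulas for $\psi$, $\eta$ and $\vphi$ displayed right after Proposition~\ref{elemlift}. Your additional packaging --- that special automorphisms form a subgroup on which $f \mapsto f_L$ is a homomorphism, so that lifting the generators $\exp(\ad_L x)$ suffices --- is the same (implicit) step the paper uses to pass to arbitrary products.
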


We recall that an elementary automorphism of a Lie algebra $M$ is a product of
automorphisms of type $\exp \ad_M x$ with $\ad_M x \in \End_k(M)$ (locally)
 nilpotent.  The reader can easily verify that for $f_L = \exp \ad_L x$, the maps $\eta$,
$\psi$ and $\vphi$ of \eqref{autchar00} are given by
\begin{align*}
  \psi(l) &= \textstyle \sum_{n\ge 1} \frac{1}{n!} \, \si\big(x, (\ad_L x)^{n-1}(l)\big)
     \quad   \hbox{for $\si = \si_{D,\be}$} , \\
 \eta(d) &= - \textstyle \sum_{n\ge 1} \frac{1}{n!} (\ad_L x)^{n-1} (d\cdot x), \\
 \vphi(d) &= - \textstyle \sum_{n\ge 2} \frac{1}{n!} \, \si\big(x, \, (\ad_L x)^{n-2}
                   (d\cdot x) \big)
 \end{align*}
These formulas indicate that the maps $\psi$, $\eta$ and $\vphi$ are in
general not zero.

\subsection{Enlarging interlaced extensions.}\label{enl} In the process of lifting
an automorphism from $L$ to an interlaced extension $E$, we will enlarge $E$
to a bigger interlaced extension using the following construction.
\sm
\begin{enumerate}[(i)]
   \item $E=\IE(L, \be, D,C, \tau)$ is an interlaced extension; \sm

  \item \label{enl-ii} $L$ is a subalgebra of a Lie algebra $L'$ equipped
      with an invariant bilinear form $\be'$ such that $\be' \mid_{L\times
      L} = \be$; \sm

 \item \label{enl-iii} the action of $D$ on $L$ extends to an action of $D$
     on $L'$ by skew derivations, and \sm

 \item \label{enl-iv} $\si_{D,\be'}(l'_1, l'_2) \in C$ for $l_1', l_2'\in
     L$.\footnote{ Note that because of assumption (iii) we necessarily have
     that $\si_{D, \be'} \co L' \times L'\to C \subset D^*$ coincides with
     the central $2$-cocycle of \eqref{eala-cons0} when restricted to $L
     \times L$.}
  \end{enumerate}
\sm

 The data $(L',\be', D, C, \ta)$ satisfy the assumptions
\eqref{gen-datai}--\eqref{gen-dataiv} of \ref{gen-data}, so that we can form
the interlaced extension
\[ E' = \IE(L',\be', D, C, \ta) = L'\oplus C \oplus D
\]
Since for $l_1, l_2 \in L$ we have $\si_{D,\be'}(l_1, l_2)(d) = \be'(d \cdot
l_1, l_2) = \be(d\cdot l_1, l_2) = \si_{D, \be}(l_1, l_2)(d)$, i.e.,
\[ \si_{D, \be'} \big |_{L \times L} = \si_{D, \be},
\]
it is immediate that {\em $E$ is a subalgebra of $E'$}. \ms

In this setting suppose that $f'$ is a special automorphism of $E'$, thus
given by the data
\[
 f'_{L'} \co L' \to L', \quad \eta'\co D \to L', \quad \psi' \co L' \to C, \quad
  \vphi' \co D \to C
\]
as in \eqref{autchar00}, satisfying the conditions
\eqref{autochar1}--\eqref{autochar6} of Lemma \ref{autchar}. It is then immediate
that
\begin{equation} \label{enl2}
 \hbox{\em $f'(E) = E \quad \iff \quad f'_{L'}(L) = L$ and $\eta'(D) \subset L$.}
\end{equation}
In this case $f'\mid_E$ is obviously an automorphism of $E$, in fact a
special automorphism given by the data
\begin{equation} \label{enl3}
 f_L = f'_{L'} \big | _{L}, \quad \eta = \eta', \quad \psi=\psi', \quad \vphi = \vphi'.
\end{equation}

\section{Review: Lie tori and extended affine Lie algebras (EALAs)} \label{sec:review}

In this section we review the theory of extended affine Lie algebras, in order
to give the reader a perspective about the achievements of this paper. The
structure of extended affine Lie algebra is intimately connected to Lie tori.
We therefore start with a short summary of the pertinent facts from the theory
of Lie tori. We then introduce EALAs and describe their construction as a
special case of an interlaced extension (\ref{gen-data}) based on a Lie torus.

\subsection{Lie tori} \label{def:lietor} We use the term ``root system'' to mean
a finite, not necessarily reduced root system $\Delta$ in the usual sense,
except that we will assume $0 \in \Delta$, as for example in \cite{AABGP}. We
denote by $\Delta\ind = \{ 0 \} \cup \{ \alpha\in \Delta: \frac{1}{2} \alpha
\not\in \Delta\}$ the subsystem of indivisible roots and by
$\euQ(\Delta)=\Span_\ZZ(\Delta)$ the root lattice of $\Delta$. To avoid some
degeneracies we will always assume that $\Delta\ne \{0\}$.

\smallskip Let $\Delta$ be a finite irreducible root system, and let $\La$ be
free abelian group of finite type.\footnote{ Thus $\La\cong \ZZ^n$ for some
$n\in \NN$. But it is not helpful to assume $\La=\ZZ^n$.} A \textit{Lie torus
of type $(\Delta,\La)$\/} is a Lie algebra $L$ satisfying the following
conditions (LT1) -- (LT4).
\smallskip
\begin{itemize}

\item[(LT1)] (a) $L$ is graded by $\euQ(\Delta) \oplus \La$. We write this
    grading as $L = \bigoplus_{\alpha \in \euQ(\Delta), \la \in \La}
    L_\alpha^\la$ and thus have $[L_\alpha^\la, L_\beta^\mu] \subset L^{\la +
    \mu}_{\alpha + \beta}$. It is convenient to define \begin{equation}
    \label{def:lietor1}
     L_\alpha = \ts \bigoplus_{\la \in \La} L_\alpha^\la \quad
    \hbox{and}\quad L^\la = \bigoplus_{\alpha \in \euQ(\Delta)}
    L_\alpha^\la.\end{equation}
     (b) We further assume that $\supp_{\euQ(\Delta)} L = \{ \alpha \in
    \euQ(\Delta); L_\alpha \ne 0\} = \Delta$, so that $L = \bigoplus_{\alpha
    \in \Delta} L_\alpha$.

\item[(LT2)] (a) If $L_\alpha^\la \ne 0$ and $\alpha \ne 0$, then there exist
    $e_\alpha^\la \in L_\alpha^\la$ and $f_\alpha^\la \in L_{-\alpha}^{-\la}$
    such
    that  
     \[  L_\alpha^\la = k e_\alpha^\la, \quad L_{-\alpha}^{-\la} = k f_\alpha^\la,
                \]
and
 \begin{equation*} \label{def:lietor2}
   [[e_\alpha^\la, f_\alpha^\la],\, x_\beta] = \lan \beta, \alpha\ch\ran x_\beta
 \end{equation*}
for all $\beta \in \Delta$ and $x_\beta \in L_\beta .$\footnote{ Here and
elsewhere $\alpha\ch$ denotes the coroot corresponding to $\alpha$ in the
 sense of \cite{Bbk}.}

(b) $L_\alpha^0 \ne 0$ for all $0 \ne \alpha \in \Delta\ind.$ \smallskip

\item[(LT3)]  As a Lie algebra, $L$ is generated by $\bigcup_{0\ne \alpha \in
    \Delta} L_\alpha$. \smallskip

\item[(LT4)] As an abelian group, $\La$ is generated by $\supp_\La L = \{ \la
    \in \La : L^\la \ne 0\}$.
\end{itemize} \smallskip

We define the {\em nullity} of a Lie torus $L$ of type $(\Delta,\La)$ as the
rank of $\La$. 
We will say that $L$ is a \emph{Lie torus} (without qualifiers) if $L$ is a Lie
torus of type $(\Delta,\La)$ for some pair $(\Delta,\La)$. A Lie torus is
called \textit{centreless\/} if its centre $\scZ(L) = \{0\}$. If $L$ is an
arbitrary  Lie torus, its centre $\scZ(L)$ is contained in $L_0$ from which it
easily follows that $L/\scZ(L)$ is in a natural way a centreless Lie torus of
the same type as $L$ and nullity (see \cite[Lemma~1.4]{y:lie}). \sm

The structure of Lie tori is known, see \cite{Al} for a recent survey. Some
more background on Lie tori is given in the papers \cite{abfp2,n:persp,
n:eala-summ}. Lie tori can of course be defined for any abelian group $\La$ (see
for example \cite{y:lie}), but only the case of a free abelian group of finite
rank is of interest for EALAs. \sm

An obvious example of a Lie torus of type $(\Delta,\ZZ^n)$ is the Lie
$k$-algebra $\g \ot R$ where $\g$ is a finite-dimensional split simple Lie
algebra of type $\Delta$ and $R=k[t_1^{\pm 1}, \ldots, t_n^{\pm 1}]$ is the
Laurent polynomial ring in $n$-variables with coefficients in $k$ equipped with
the natural $\ZZ^n$-grading. Another important example, first studied in
\cite{bgk}, is the Lie algebra $\lsl_\ell(\scQ)$ for $\scQ$ a quantum torus
(see \ref{qua-pro} and \ref{link}).

\subsection{Some known properties of centreless Lie tori}\label{lietor-prop}
We review some of the properties of Lie tori needed in the following.
We assume that $L$ is a centreless Lie torus of type $(\Delta,\La)$ and nullity
$n$.
\smallskip

\begin{inparaenum}[(a)]
\item \label{lietor-prop-a} For $e^\la_\alpha$ and $f_\alpha^\la$ as in (LT2)
    we     put    $h_\alpha^\la    =    [e_\alpha^\la, f_\alpha^\la]\in L_0^0$ and observe that $(e_\alpha^\la,
    h_\alpha^\la, f_\alpha^\la)$ is an $\lsl_2$-triple. Then \begin{equation}
    \label{def:h}
   \frh = \Span_k \{ h^\la_\alpha\} = L_0^0\end{equation}   is a toral
\footnote{A subalgebra $T$ of a Lie algebra $L$ is toral, sometimes also called
$\ad$-diagonalizable, if $L = \bigoplus_{\al\in T^*} L_\al(T)$ for $L_\al(T) =
\{ l \in L : [t,l] = \al(t)l \hbox{ for all $t\in T$}\}$. In this case $\{ \ad
t : t\in T\}$ is a commuting family of $\ad$-diagonalizable endomorphisms.
Conversely, if $\{ \ad t : t\in T\}$ is a commuting family of
$\ad$-diagonalizable endomorphisms and $T$ is a finite-dimensional subalgebra,
then $T$ is a toral.} subalgebra of $L$ whose root spaces are the $L_\alpha$,
$\alpha \in \Delta$. \sm

\item\label{lietor-prop-b} Up to scalars, $L$ has a unique nondegenerate
 symmetric bilinear form $\inpr$ which is {\em $\La$-graded\/} in the sense that $(L^\la \mid L^\mu) = 0$ if
    $\la + \mu \ne 0$, \cite{NPPS, y:lie}. Since the subspaces $L_\alpha$ are
    the root spaces of the toral subalgebra $\frh$ we also know $(L_\alpha \mid
    L_\ta) = 0$ if $\alpha + \ta \ne 0$. \smallskip

\item\label{lietor-prop-c} Let $\Ctd_k(L) = \{ \chi \in \End_k(L) :
    \chi([l_1,     l_2]) = [l_1,    \chi(l_2)] \; \forall \, l_1, l_2\in L \}$ be the centroid of $L$ (see for
    example \cite{bn} for general facts about centroids). Since $L$ is perfect,
    $\Ctd_k(L)$ is a commutative associative unital subalgebra of $\End_k(L)$.
    It is graded with respect to the $\La$-grading \eqref{def:lietor1} of $L$:
\[
    \Ctd(L) = \textstyle \bigoplus_{\la \in \La}\,  \Ctd(L)^\la
\] where
$\Ctd(L)^\la$ consists of those centroidal transformations $\chi$ satisfying
$\chi(L^\mu) \subset L^{\la + \mu}$ for all $\mu \in \La$. One knows that
$\Ctd_k(L)$ is graded-isomorphic to the group ring $k[\Xi]$ for a subgroup
$\Xi$ of $\La$, the so-called {\em central grading group}. Hence $\Ctd_k(L)$ is
a Laurent polynomial ring in $\nu$ variables, $0 \le \nu \le n$,
(\cite[7]{n:tori}, \cite[Prop.~3.13]{bn}). (All possibilities for $\nu$ do in
fact occur, for example for $L=\lsl_\ell(\scQ)$, see \ref{qua-pro} and
\ref{link}.) \sm

\item \label{lietor-prop-d} The space $L$ is naturally a $\Ctd_k(L)$-module via
    $\chi    \cdot    a  = \chi(a)$. As a $\Ctd_k(L)$-module, $L$ is free.
    If $L$ is {\em fgc}, i.e.,  namely finitely generated as a module over its centroid, then $L$ is a
    multiloop algebra \cite{abfp2}. If $L$ is not fgc, equivalently $\nu < n$,
    one knows (\cite[Th.~7]{n:tori}) that $L$ has root-grading type ${\rm A}$.
    Lie tori with this root-grading type are classified in \cite{bgk,bgkn,y1}.
    It follows from this classification together with \cite[4.9]{ny} that
    $L\simeq \lsl_l(\scQ)$ for $\scQ$ a quantum torus in $n$ variables and
    structure matrix $q=(q_{ij})$ an $n\times n$ quantum matrix with at least
    one $q_{ij}$ not a root of unity (\ref{qua-pro}).
\smallskip

\item \label{lietor-prop-e} Any $\theta \in \Hom_\ZZ(\La, k)$ induces a
    so-called {\em degree
    derivation} $\pa_\theta$ of $L$ defined by $\pa_\theta (l^\la) =
    \theta(\la) l^\la$ for $l^\la \in L^\la$. We put $\euD = \{ \pa_\theta:
    \theta \in \Hom_\ZZ(\La, k) \}$ and note that $\theta \mapsto \pa_\theta$
    is a vector space isomorphism from $\Hom_\ZZ(\La, k)$ to $\euD$, whence
    $\euD\simeq k^n$. We define $\ev_\la \in \euD^*$ by $\ev_\la(\pa_\theta) =
    \theta(\la)$. One knows (\cite[8]{n:tori}) that $\euD$ induces the
    $\La$-grading of $L$ in the sense that $L^\la= \{ l \in L : \pa_\theta(l) =
    \ev_\la(\pa_\theta) l \hbox{ for all } \theta \in \Hom_\ZZ(\La, k)\}$ holds
    for all $\la \in \La$.\smallskip

\item If $\chi \in \Ctd_k(L)$ then $\chi d \in \Der_k(L)$ for any derivation
    $d\in \Der_k(L)$. We call
\begin{equation}\label{Volodya1}  \CDer_k(L) := \Ctd_k(L) \euD
   = \ts \bigoplus_{\xi \in \Xi} \, \Ctd(L)^\xi \euD\end{equation}
the {\em centroidal derivations\/} of $L$.
It is easily seen that $\CDer(L)$ is a $\Xi$-graded subalgebra of $\Der_k(L)$,
a  generalized Witt algebra. Note that $\euD$ is a toral subalgebra of $\CDer_k(L)$
whose root spaces are the $\Ctd(L)^\xi \euD = \{ d\in \CDer(L): [t, d] = \ev_\xi(t) d \hbox{ for all }
t\in \euD\}$. One also knows (\cite[9]{n:tori}) that
\begin{equation} \label{lietor-prop1}
\Der_k(L) = \IDer(L) \rtimes \CDer_k(L) \quad (\hbox{semidirect product}),
\end{equation}
where $\IDer(L)$ is the ideal of inner derivations of $L$.
\sm

\item \label{lietor-prop-g} For the construction of EALAs, the $\Xi$-graded
    subalgebra     $\SCDer_k(L)$    of {\em skew-centroidal derivations\/} is important:
\begin{align*}
    \SCDer_k(L) &= \{ d\in \CDer_k(L) : (d \cdot l \mid l) = 0 \hbox{ for all } l\in L\}
  \\ &=  \ts \bigoplus_{\xi \in \Xi } \SCDer_k(L)^\xi,  \\
   \SCDer_k(L)^\xi &=  \Ctd(L)^\xi \{ \pa_\theta : \theta(\xi) = 0 \}.
\end{align*}
\end{inparaenum}

\subsection{Extended affine Lie algebras (EALAs)} \label{def:eala}
 An \textit{extended affine Lie algebra\/} or EALA
for short, is a pair $(E,H)$ consisting of a Lie algebra $E$ over $k$ and a
subalgebra $H$ of $E$
satisfying the axioms (EA0) -- (EA5) below.
\begin{itemize}
\item[(EA0)] $E$ has an invariant nondegenerate symmetric bilinear form
    $\inpr$. \smallskip

\item[(EA1)] $H$ is a nontrivial finite-dimensional toral  and
    self-centra\-li\-zing subalgebra of $E$.
\end{itemize}
Thus $E = \ts\bigoplus_{\al \in H^*} E_\al $ for $E_\al = \{ e\in E: [h,e] =
\al(h)e \hbox{ for all } h\in H\}$ and $E_0 = H$. We denote
     by $\Psi=\{\al \in H^*: E_\al \ne 0\}$ the set of roots of $(E,H)$  --
     note that $0 \in \Psi$! Because the restriction of $\inpr$ to $ H \times H $ is nondegenerate,
     one can in the usual way transfer this bilinear form to $H^*$ and
     then introduce anisotropic roots $\Psi\an= \{ \al \in \Psi : (\al \mid
     \al) \ne 0\}$ and isotropic (= null) roots $\Psi^0 = \{ \al\in \Psi : (\al
     \mid \al) = 0\}$. The \textit{core of $\big(E,H, \inpr \big)$} is by definition the
     subalgebra generated by $\bigcup_{\al \in \Psi\an} E_\al$. It will be henceforth denoted by $E_c.$\smallskip
\begin{itemize}
\item[(EA2)] For every $\al \in \Psi\an$ and $x_\al \in E_\al$, the operator
    $\ad x_\al$ is locally nilpotent on $E$. \smallskip

\item[(EA3)] $\Psi\an$ is connected in the sense that for any decomposition
    $\Psi\an = \Psi_1 \cup \Psi_2$ with $\Psi_1 \neq \emptyset$ and $\Psi_2
    \neq \emptyset$ we have $(\Psi_1 \mid \Psi_2) \neq 0$.
   \smallskip

\item[(EA4)] The centralizer of the {\em core $E_c$\/} of $E$ is contained in
    $E_c$, i.e., $\{e \in E : [e, E_c] =0 \} \subset E_c$. \smallskip

\item[(EA5)] The subgroup $\La = \Span_\ZZ(\Psi^0) \subset H^*$ generated by
    $\Psi^0$ in $(H^*,+)$ is a free abelian group of finite rank.
\end{itemize}

The attentive reader will have noticed that the choice of invariant
nondegenerate symmetric bilinear form in (EA0) is part of the structural data
defining an EALA. However, one can show that another choice of an invariant
nondegenerate symmetric bilinear form leads to the same set of anisotropic and
isotropic roots $\Psi\an$ and $\Psi^0$, and thus also to the same core $E_c$
and {\em centreless core $E_{cc} = E_c/\scZ(E_c)$\/}, see \cite[Rem.~2.4 and
Cor.~3.3]{CNPY}. The core $E_c$ of an EALA $E$ is always an ideal of $E$. \sm

Some references for EALAs are \cite{AABGP, bgk, Ne4, n:persp, n:eala-summ}. It
is immediate that any finite-dimensional split simple Lie algebra $\g$ is an
EALA of nullity $0$ and $\g=\g_c=\g_{cc}$. The converse is also true,
\cite[Prop.~5.3.24]{n:eala-summ}. It is also known that any affine Kac-Moody
algebra over $\bbC$ is an EALA -- in fact, by \cite{abgp}, the affine Kac-Moody
algebras $\g$ are precisely the EALAs over $\bbC$ of nullity $1$. For those,
$\g_c=[\g, \g]$ and $\g_{cc}$ is an (twisted or untwisted) loop algebra.

\subsection{The roots of an EALA}\label{rev:root} The set $\Psi$ of  roots
of an EALA $(E,H)$ is an extended affine root system in the sense of
\cite[Ch.~I]{AABGP} (see also the surveys \cite[\S2, \S3]{n:persp} and
\cite[\S5.3]{n:eala-summ}). Thus, there exists an irreducible finite (but
possibly non-reduced) root system $\Delta \subset H^*$, an embedding
$\Delta\ind \subset \Psi$ and a family $(\La_\alpha : \alpha \in \Delta)$ of
subsets $\La_\alpha \subset \La$ such that
\begin{equation} \label{root1}
  \Span_k(\Psi) = \Span_k(\Delta) \oplus \Span_k(\La) \quad \hbox{and} \quad
  \Psi = \ts \bigcup_{\alpha \in \Delta} ( \alpha + \La_\alpha).
\end{equation}
Using this (non-unique) decomposition of $\Psi$, we write any $\psi \in \Psi$
as $\psi = \al + \la$ with $\al \in \Delta$ and $\la \in \La_\alpha \subset
\La$ and define $(E_c)_\al^\la = E_c \cap E_\psi$. Then the core $E_c =
\bigoplus_{\alpha \in \Delta, \la \in \La} (E_c)_\alpha^\la$ is a Lie torus of
type $(\Delta,\La)$, and the centreless core $E_{cc} = E_c/\scZ(E_c)$ is a
centreless Lie torus.

\subsection{Construction of EALAs} \label{eala-cons} To construct an EALA one reverses the
process described in \ref{rev:root}. We will use data $(L,\sigma_D,\ta)$
described below. Some more background material can be found in
\cite[\S6]{n:persp} and \cite[\S5.5]{n:eala-summ}:
\begin{itemize}

\item $L$ is a centreless Lie torus of type $(\Delta,\La)$. We fix a
    $\La$-graded invariant nondegenerate symmetric bilinear form $\inpr$ (see
    \ref{lietor-prop}\eqref{lietor-prop-b}) and let $\Xi$ be the central
    grading group of $L$ (see \ref{lietor-prop}\eqref{lietor-prop-c}). \sm

 \item $D=\bigoplus_{\xi \in \Xi} D^\xi$ is a graded subalgebra of $
     \SCDer_k(L)$ (see \ref{lietor-prop}\eqref{lietor-prop-g}) such that the
     evaluation map $\ev_{D^0} : \La \to D^{0\, *}$, $\la \to \ev_\la
     \mid_{D^0}$, defined in \ref{lietor-prop}\eqref{lietor-prop-e},  is
     injective.

     Since $(L^\la \mid L^\mu) =0$ if $\la + \mu \ne 0$ and since
     $D^\xi(L^\la) \subset L^{\xi + \la}$ it follows that the central cocycle
     $\si_D$ of \eqref{eala-cons0} has values in the graded dual \[ D^{\gr
     *}=:C\]  of $D$. Recall $C=\bigoplus_{\xi \in \Xi} C^\xi$ with $C^\xi =
     (D^{-\xi})^* \subset D^*$. The contragredient action of $D$ on $D^*$
     leaves $C$ invariant.
     \sm

  \item $\ta : D\times D \to C$ is an \textit{affine cocycle\/} defined to be
      a $2$-cocycle   satisfying $\ta(d^0, d) = 0$ and $\ta(d_1, d_2)(d_3) =
      \ta(d_2, d_3)(d_1)$ for all $d,d_i \in D$ and $d^0 \in D^0$.

     \noindent It is important to point out that there do exist non-trivial
affine cocycles, see \cite[Rem.~3.71]{bgk}.
 \end{itemize}
\sm

\noindent The data $(L,\sigma_D,\ta)$ with $\be$ the unique invariant bilinear
form $\inpr$ of \ref{lietor-prop}\eqref{lietor-prop-b} satisfy all the axioms
of our general construction \ref{gen-data}. Thus the interlaced extension
\begin{equation}\label{eala-cons1} E = L \oplus C \oplus D \end{equation}
is a Lie algebra with respect to the product \eqref{n:gencons3}. Note that $E$
contains the toral subalgebra
\[H =\frh \oplus C^0\oplus D^0\]
for $\frh$ as in \eqref{def:h}. The symmetric bilinear form $\inpr$
on $E$, defined by
\[ \big( l_1 +  c_1 + d_1 \mid l_2 + c_2 + d_2\big)
  = (l_1 \mid l_2)_L + c_1(d_2) + c_2(d_1), \]
is nondegenerate and invariant, thus fulfilling the axiom (E0). \sm

\textbf{Examples:} (a) In case $L=\g$ is a finite-dimensional split simple Lie
algebra, $\Ctd(\g) = 0$, $\Xi=\{0\}$, and so also $\SCDer(\g)=0$. The
construction above therefore yields $E=\g$.

(b) In case $L$ is a twisted or untwisted loop algebra based on $\g$ as in (a)
over $\bbC$, the centroid $\Ctd(L)$ is isomorphic to a Laurent polynomial ring
$R$, $\CDer(L)$ is a free $R$-module of rank $1$, but $\SCDer(\g)$ is
$1$-dimensional over $\bbC$. The only non-trivial choice is therefore $D\simeq
\bbC$. In this case necessarily $\tau = 0$.  Thus the construction of affine
Kac-Moody algebras is a special case of our construction above.

\begin{theorem}[{\cite[Th.~6]{Ne4}}]\label{n:mainconst} {\rm (a)} The triple
$\big(E,H, \inpr \big)$ constructed above is an extended affine Lie algebra,
denoted $\EA(L,D,\ta)$. Its core is $L \oplus D^{\gr\, *}$ and its centreless
core is $L$. \smallskip

{\rm (b)} Conversely, let $\big(E,H, \inpr \big)$ be an extended affine Lie
algebra, and let $L=E_c/\scZ(E_c)$ be its centreless core. Then there exists a
subalgebra $D\subset \SCDer_k(L)$ and an affine cocycle $\ta$ satisfying the
conditions in {\rm \ref{eala-cons}} such that $\big(E,H, \inpr \big) \simeq
\EA(L, \inpr_L, D,\ta)$ for some $\Lambda$-graded invariant nondegenerate
bilinear form $\inpr_L$ on $L.$
\end{theorem}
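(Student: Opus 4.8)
The plan is to handle the two parts by opposite routes: part (a) is a verification of the EALA axioms (EA0)--(EA5) for the interlaced extension $E=L\oplus C\oplus D$, while part (b) reconstructs the data $(D,\ta)$ from an abstract EALA. For (a) I would first record the $\ad H$-weight decomposition of $E$, where $H=\frh\oplus C^0\oplus D^0$ with $\frh=L_0^0$. Using the product formula \eqref{n:gencons3} one checks that $H$ is abelian, that $\frh$ acts through the $\euQ(\Delta)$-grading and $D^0\subset\euD$ through the $\La$-grading (via the functionals $\ev_\la$), and that each graded piece $L_\alpha^\la$, $C^\xi$, $D^\xi$ is a single $H$-weight space; in particular $H$ is toral. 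Axiom (EA0) holds by the construction of $\inpr$ on $E$. The only nonformal point in (EA1) is that $H$ is self-centralizing, i.e.\ $E_0=H$: an element of $E_0$ has trivial $\euQ(\Delta)$-degree and, in each of its $L$-, $C$- and $D$-components, trivial $\La$- resp.\ $\Xi$-weight, and the injectivity of $\ev_{D^0}\co\La\to D^{0*}$ imposed in \ref{eala-cons} is precisely what forces the relevant $\la$ resp.\ $\xi$ to vanish.

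Next I would identify the roots. Transferring $\inpr$ to $H^*$, the value $(\psi\mid\psi)$ depends only on the $\frh^*$-component of $\psi$, so $\Psi\an$ consists of the weights coming from $L_\alpha^\la$ with $\alpha\ne 0$ and $\Psi^0$ of the weights in the $\La$-direction. Then (EA3) reduces to irreducibility of $\Delta$, and (EA5) to $\La$ being free of finite rank. For (EA2), local nilpotency of $\ad_E x_\psi$ for $x_\psi\in L_\alpha^\la$ follows from that of $\ad_L x_\psi$, since $\ad_E x_\psi$ sends $D$ into $L$ and $L$ into $L\oplus C$ while annihilating $C$. The core is generated by the anisotropic root spaces, which generate $L$ by (LT3) and whose brackets produce the values $\si_{D,\be}(L,L)$; as these separate points of $D$ they span $C=D^{\gr *}$, so $E_c=L\oplus C$ with no $D$-component, and since $\scZ(L)=0$ one gets $\scZ(E_c)=C$ and $E_{cc}=L$. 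Finally (EA4) is the one place requiring the semidirect decomposition $\Der_k(L)=\IDer(L)\rtimes\CDer_k(L)$ of \eqref{lietor-prop1}: a centralizing element $l\oplus c\oplus d$ forces $\ad_L l + d=0$ with $d$ centroidal, whence $\ad_L l=d=0$ and the element lies in $C\subset E_c$.

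For part (b) the plan is to run this construction backwards. By \ref{rev:root}, $L=E_{cc}$ is a centreless Lie torus, so it carries the canonical data $\Ctd(L)$, $\euD$, $\Xi$ and a $\La$-graded invariant form $\inpr_L$. The core $E_c$ is an ideal of $E$ and a central extension of $L$ by $\scZ(E_c)$; the action of $E/E_c$ on $E_c$ descends to an action on $L$ by derivations, which are skew because $\inpr$ is invariant, and centroidal modulo $\IDer(L)$ by \eqref{lietor-prop1}. This produces a graded subalgebra $D\subset\SCDer_k(L)$; axiom (EA1) becomes injectivity of $\ev_{D^0}$, the central space $\scZ(E_c)$ is identified with the graded dual $D^{\gr *}=C$, and the bracket of lifts of $D$ yields a $C$-valued $2$-cocycle $\ta$ whose affine-cocycle properties come from skew-symmetry and invariance of $\inpr$. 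Matching against \eqref{n:gencons3} gives the isomorphism $(E,H,\inpr)\simeq\EA(L,\inpr_L,D,\ta)$.

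The main obstacle is part (b). Whereas (a) is bookkeeping once the weight decomposition is in hand, the converse requires a canonical splitting $E_c\cong L\oplus\scZ(E_c)$, the identification of $\scZ(E_c)$ with $D^{\gr *}$, and the verification that the derivations induced by $E$ are not merely skew but skew-\emph{centroidal}. All three rest on the fine structure theory of centreless Lie tori summarized in \ref{lietor-prop} --- the description of $\Ctd(L)$, the decomposition \eqref{lietor-prop1}, and the uniqueness of the graded form --- so the real work lies in organizing these structural inputs rather than in any single computation.
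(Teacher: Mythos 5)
First, a point of reference: the paper itself does not prove Theorem~\ref{n:mainconst} at all --- it is quoted as background, with the proof attributed to \cite[Th.~6]{Ne4}. So there is no internal proof to compare you against; your proposal has to be judged on its own merits. Much of it is sound and follows the route the cited proof takes: the $H$-weight decomposition of $E=L\oplus C\oplus D$, the use of injectivity of $\ev_{D^0}$ to get $E_0=H$, local nilpotency in (EA2) from the fact that $\ad_E x$ maps $D\to L$, $L\to L\oplus C$, $C\to 0$, and the use of the semidirect decomposition \eqref{lietor-prop1} together with $\IDer(L)\cap\CDer_k(L)=0$ for (EA4) are all correct and correctly attributed to the structure theory of \ref{lietor-prop}.

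The genuine gap is in your identification of the core, and it is not cosmetic. The subalgebra generated by the anisotropic root spaces contains the spaces $L_\alpha^\la$ ($\alpha\ne 0$) and, beyond those, only spans of the \emph{mixed} elements $[x,y]_L\oplus\si(x,y)$ produced by the product \eqref{n:gencons3}; brackets never produce ``the values $\si_{D,\be}(L,L)$'' in isolation. Hence the observation that $\si(L,L)$ separates points of $D$ (which, together with finite-dimensionality of each $D^\xi$, does show $\Span\si(L,L)=C$) does \emph{not} place $C$ inside $E_c$. What is actually needed is that the pure central elements
\begin{equation*}
\Bigl\{\ \textstyle\sum_i \si(x_i,y_i)\ :\ \sum_i [x_i,y_i]_L=0\ \Bigr\}
\end{equation*}
exhaust $C$; equivalently, that for every $0\ne d\in D$ the scalar $2$-cocycle $(x,y)\mapsto\be(d\cdot x,y)$ on $L$ is \emph{not} a coboundary. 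This is a substantive fact about centreless Lie tori (its verification uses, e.g., that $h_\alpha^\la=[e_\alpha^\la,f_\alpha^\la]$ is independent of $\la$, that $\be(e_\alpha^\la,f_\alpha^\la)=\tfrac{1}{2}\be(h_\alpha,h_\alpha)$, the $\Xi$-invariance of the supports $\La_\alpha$, and (LT2b)/(LT4)), and nothing in your sketch supplies it. Moreover the axioms cannot be checked before this is settled: if $E_c$ were a proper graded subalgebra $L\oplus C_0$ with $C_0\subsetneq C$, then by your own (correct) centralizer computation the centralizer of $E_c$ in $E$ equals all of $C$, so (EA4) would \emph{fail}. Finally, part (b) is a plan rather than a proof --- the canonical splitting $E\simeq L\oplus\scZ(E_c)\oplus D$, the identification $\scZ(E_c)\simeq D^{\gr\,*}$ via the form, and the passage from ``skew modulo $\IDer(L)$'' to genuinely skew-centroidal are exactly where the work of \cite{Ne4} lies, and you leave them unexecuted.
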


\section{Lifting automorphisms from $\lsl_\ell(\scA)$ to $\IE(\lsl_\ell(\scA), D, C)$.}\label{sec:lift}

\subsection{The Lie algebras $\gl_\ell(\scA)$ and $\lsl_\ell(\scA)$.}\label{gl}
We assume throughout that  $\ell\ge 2$. The letter $\scA$ will always denote a
unital associative $k$--algebra. It becomes a Lie algebra $\Lie(\scA)$ with
respect to the commutator. We denote by $[\scA, \scA]$ the commutator
subalgebra of $\Lie(\scA)$,
\[
 [ \scA, \scA] = \Span_\ZZ\{ ab-ba: a,b\in \scA\}
\]
and by $\scZ(\scA) = \{ z\in \scA: [z,\scA] = 0 \}$ the centre of $\scA$,
which is also the centre of $\Lie(\scA)$. \sm

We denote by $\Mat_\ell(\scA)$ the unital associative algebra of $\ell\times
\ell$ matrices with coefficients in $\scA$, and by $\gl_\ell(\scA)$ its
associated Lie algebra: $\gl_\ell(\scA) = \Lie\big(\Mat_\ell(\scA)\big)$. \sm

The derived algebra of $\gl_\ell(\scA)$ is the {\em special linear Lie algebra
$\lsl_\ell(\scA)$ with coefficients from $\scA$}:
\begin{equation}\label{gl00}
   \lsl_\ell(\scA) = [\gl_\ell(\scA), \, \gl_\ell(\scA)].
\end{equation}
We let $\Tr$ be the trace of a matrix in $\Mat_\ell(\scA)$. The reader should
be warned that $\Tr(xy) \ne \Tr(yx)$ in general, rather we have the well-known
fact
\begin{equation} \label{gl0}
   \lsl_\ell(\scA) = \{ x\in \gl_\ell(\scA) : \Tr(x) \in [\scA, \scA]\}.\footnote{ Of course if $\scA$ is commutative, then $[\scA, \scA] = 0$ and we recover the ``usual" definition  of $\lsl_\ell(\scA).$ }
\end{equation}

Moreover, we will need
\begin{equation}
  \label{gl1}
 \scC_{\gl_\ell(\scA)} \, \big( \lsl_\ell(\scA)\big) = \scZ(\scA) \rmE_\ell =
 \scZ\big( \gl_\ell(\scA)\big),
\end{equation}
where $\scC$ denotes the centralizer and $\rmE_\ell$ the $\ell\times \ell$
identity matrix. \sm

Any $d\in \Der_k(\scA)$ stabilizes $\scZ(\scA)$ and $[\scA, \scA]$, and induces
a derivation of the associative algebra $\Mat_\ell(\scA)$ by
\begin{equation} \label{gl2}
   d \cdot x = \big( d(x_{ij}) \big) \quad \hbox{for $x=(x_{ij}) \in \Mat_\ell(\scA$).}
\end{equation}
It is then also a derivation of $\gl_\ell(\scA)$, stabilizing
$\scZ\big(\gl_\ell(\scA)\big)= \scZ(\scA) E_\ell$ and $\lsl_\ell(\scA)$. In the
following, a subalgebra $D < \Der_k(\scA)$ will be a standard feature of our
work. We will always use the action of $\Der_k(\scA)$ and hence of $D$
described in \eqref{gl2} without further explanation. Also, we will sometimes
write $dx$ or $d(x)$ for $d\cdot x$.

\subsection{The groups $\GL_\ell(\scA)$ and $\EL_\ell(\scA)$.} \label{elg}
We denote by $\GL_\ell(\scA)$ the group of invertible $\ell\times \ell$
matrices with coefficients from the unital associative $k$-algebra $\scA$.
Every $g\in \GL_\ell(\scA)$ gives rise to an automorphism $\Int(g)$ of the
associative algebra $\Mat_\ell(\scA)$, defined by $\Int(g)(a) = g a g\mn$. A
fortiori, $\Int(g)$ is an automorphism of $\gl_\ell(\scA)$. It stabilizes
$\lsl_\ell(\scA)$, whence is by restriction an automorphism of
$\lsl_\ell(\scA)$, again  denoted $\Int(g)$. Moreover, $\Int(g)$ induces the
identity on $\scZ\big( \gl_\ell(\scA) \big)$ as can be seen for the last
equality of (\ref{gl1}). \sm

The {\em elementary linear group\/} $\EL_\ell(\scA)$ is the subgroup of
$\GL_\ell(\scA)$ generated by the matrices $\rmE_\ell + a E_{ij}$ for arbitrary
$a\in \scA$ and $i\ne j$. Since $(aE_{ij})^2= 0 $ in $\Mat_\ell(\scA)$ the
derivation $\ad aE_{ij}\in \Der\big(\lsl_\ell(\scA)\big)$ is nilpotent, in fact
$(\ad aE_{ij})^3 = 0$, and the inner automorphism $\Int( E_\ell + a E_{ij})\in
\Aut_k \big(\lsl_\ell(\scA)\big)$ is elementary in the sense of \ref{elemlift}:
\[
   \Int( \rmE_\ell + a E_{ij}) = \exp (\ad a E_{ij}).\]
It follows that
\begin{equation}\label{elg1}
  \Int(g) \in \EAut_k \big( \lsl_\ell(\scA)\big)
\quad \hbox{\em for every $g\in \EL_\ell(\scA)$},
\end{equation}
where $\EAut\big( \lsl_\ell(\scA)\big)$ is the group of elementary
automorphisms of $\lsl_\ell(\scA)$. Moreover, the commutator
relation
\[ \dbl \rmE_\ell + a E_{ij}, \rmE_\ell + E_{j\ell}\dbr = E_\ell + a E_{i\ell} \quad (i,j,\ell \ne)
\]
shows that
\begin{equation}
  \label{elg2} \EL_\ell(\scA) \subset \scD \big( \GL_\ell(\scA)\big).
\end{equation}

\begin{Lemma}\label{Hlem} Let $\scA$ be a unital associative $k$--algebra
satisfying
\begin{equation}\label{Hlem00}
   \scA = \scZ(\scA) \oplus [\scA, \scA].
\end{equation}
Then
\begin{align} \label{Hlem0}
\gl_\ell(\scA) &= \scZ(A)\rmE_\ell \oplus \lsl_\ell(\scA), \quad
\hbox{and} \\
(dg) g^{-1} \in \lsl_\ell(\scA) &\iff g^{-1} dg \in \lsl_\ell(\scA) \label{Hlem1}
\end{align}
for any $d\in \Der_k(\scA)$ and $g\in \GL_\ell(\scA)$. Moreover, for $D\subset
\Der_k(\scA)$ the set
\[  H = H_D = \{ g\in \GL_\ell(\scA): (dg) g^{-1} \in \lsl_\ell(\scA) \hbox{ for all $d\in D$}\}
\]
is a normal subgroup of $\GL_\ell(\scA)$ containing the commutator subgroup
$\scD\big( \GL_\ell(\scA)\big)$ of $\GL_\ell(\scA)$.
\end{Lemma}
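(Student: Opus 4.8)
The plan is to treat the three assertions in turn, the organizing idea being that everything becomes transparent after reducing modulo $\lsl_\ell(\scA)$. For the decomposition \eqref{Hlem0} I would combine the trace description \eqref{gl0} of $\lsl_\ell(\scA)$ with $\scZ\big(\gl_\ell(\scA)\big) = \scZ(\scA)\rmE_\ell$ from \eqref{gl1}. Given $x\in\gl_\ell(\scA)$, decompose $\Tr(x) = z + c$ according to \eqref{Hlem00}, with $z\in\scZ(\scA)$ and $c\in[\scA,\scA]$. Since $\Char k = 0$ and $\ell\ge 2$, the integer $\ell$ is invertible in $k$, so $x = (\ell^{-1}z)\rmE_\ell + \big(x-(\ell^{-1}z)\rmE_\ell\big)$ writes $x$ as a central matrix plus a matrix of trace $c\in[\scA,\scA]$, i.e.\ an element of $\lsl_\ell(\scA)$. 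Directness is equally quick: if $z\rmE_\ell\in\lsl_\ell(\scA)$ with $z\in\scZ(\scA)$, then $\ell z = \Tr(z\rmE_\ell)\in\scZ(\scA)\cap[\scA,\scA] = 0$, whence $z=0$.

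Next I would isolate the one computation that drives the remaining parts: for $X,Y\in\Mat_\ell(\scA)$ one has $\Tr(XY)-\Tr(YX)\in[\scA,\scA]$, directly from $\Tr(XY)-\Tr(YX)=\sum_{i,j}\big(X_{ij}Y_{ji}-Y_{ji}X_{ij}\big)$. This gives $\Tr\big(\Int(u)(x)\big)\equiv\Tr(x)\pmod{[\scA,\scA]}$, so by \eqref{gl0}
$$\Int(u)(x)-x\in\lsl_\ell(\scA)\qquad\text{for all }u\in\GL_\ell(\scA),\ x\in\gl_\ell(\scA);$$
in other words $\Int(u)$ acts trivially on $\gl_\ell(\scA)/\lsl_\ell(\scA)$. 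Assertion \eqref{Hlem1} is then immediate from the purely formal identity $g^{-1}(dg)=\Int(g)^{-1}\big((dg)g^{-1}\big)$ together with the fact that $\Int(g)$ stabilizes $\lsl_\ell(\scA)$ (see \ref{elg}).

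For the subgroup $H_D$ I would attach to each $d\in D$ the map $\phi_d\co\GL_\ell(\scA)\to\gl_\ell(\scA)$, $\phi_d(g)=(dg)g^{-1}$. The Leibniz rule for $d$ yields the crossed-homomorphism identity $\phi_d(gh)=\phi_d(g)+\Int(g)\big(\phi_d(h)\big)$. Composing with the projection $\pi\co\gl_\ell(\scA)\to\gl_\ell(\scA)/\lsl_\ell(\scA)\cong\scZ(\scA)$ and invoking the triviality of $\Int(g)$ on the quotient, I find that $\bar\phi_d:=\pi\circ\phi_d$ is an ordinary group homomorphism from $\GL_\ell(\scA)$ into the abelian group $\big(\scZ(\scA),+\big)$. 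Since $\phi_d(g)\in\lsl_\ell(\scA)$ precisely when $\bar\phi_d(g)=0$, we have $H_D=\bigcap_{d\in D}\Ker\bar\phi_d$. Being an intersection of kernels of homomorphisms, $H_D$ is a normal subgroup, and the induced embedding of $\GL_\ell(\scA)/H_D$ into a product of copies of $\scZ(\scA)$ shows the quotient is abelian; hence $\scD\big(\GL_\ell(\scA)\big)\subset H_D$.

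The only genuine subtlety I anticipate is the reduction modulo $\lsl_\ell(\scA)$: recognizing that $\Int$ becomes trivial on $\gl_\ell(\scA)/\lsl_\ell(\scA)$ --- equivalently, that the trace is conjugation-invariant modulo $[\scA,\scA]$ --- is exactly what converts the a priori twisted cocycle $\phi_d$ into a homomorphism into an abelian group, and it is this that makes both normality and the inclusion of the commutator subgroup fall out with no further calculation. The remaining steps are routine applications of the Leibniz rule and the trace criterion \eqref{gl0}.
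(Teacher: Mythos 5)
Your proof is correct, and while your treatment of \eqref{Hlem0} and \eqref{Hlem1} matches the paper's (same $\frac{1}{\ell}\Tr$-decomposition; the paper proves \eqref{Hlem1} via $g\mn dg = (dg)g\mn + [g\mn, dg]$ rather than your conjugation identity, a cosmetic difference), your handling of $H_D$ takes a genuinely different and more structural route. The paper works by hand: it verifies closure of $H$ under inversion and multiplication via the Leibniz rule, then shows $\dbl g_1, g_2\dbr \in H$ by explicitly expanding $\big(d\dbl g_1,g_2\dbr\big)\dbl g_1,g_2\dbr\mn$ into four conjugated terms and cancelling them pairwise using the observation $(xy)_z=(yx)_z$ relative to the decomposition \eqref{Hlem0}; normality then follows because a subgroup containing $\scD\big(\GL_\ell(\scA)\big)$ is automatically normal. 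You instead recognize $\phi_d(g)=(dg)g\mn$ as a crossed homomorphism for the conjugation action, observe that $\Int(u)$ is trivial on $\gl_\ell(\scA)/\lsl_\ell(\scA)$ (your trace computation is exactly the paper's $(xy)_z=(yx)_z$ in disguise), and conclude that $\bar\phi_d = \pi\circ\phi_d$ is an honest homomorphism into the abelian group $\scZ(\scA)$; then $H_D=\bigcap_{d\in D}\Ker\bar\phi_d$ is normal for free, and $\GL_\ell(\scA)/H_D$ embeds into a product of copies of $\scZ(\scA)$, forcing $\scD\big(\GL_\ell(\scA)\big)\subset H_D$ with no computation. Both arguments rest on the same two ingredients (the Leibniz cocycle identity and conjugation-invariance of the trace modulo $[\scA,\scA]$), but your packaging eliminates the paper's longest calculation and makes it transparent why commutators land in $H_D$; the paper's version, in exchange, is elementary and entirely self-contained at the level of matrix identities.
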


\begin{proof} Our assumption \eqref{Hlem00} implies $\scA \rmE_\ell = \scZ(\scA)\rmE_\ell
\oplus [\scA, \scA]\rmE_\ell$. Since $[\scA, \scA]\rmE_\ell = \scA \rmE_\ell
\cap \lsl_\ell(\scA)$ by \eqref{gl0}, the equation \eqref{Hlem0} follows from
the decomposition
\[
  x=\textstyle (\frac{1}{\ell} \Tr(x) \rmE_\ell) + (x- \frac{1}{\ell} \Tr(x) \rmE_\ell)
\]
with $\Tr(x- \frac{1}{\ell} \Tr(x) \rmE_\ell)= 0$ for arbitrary $x\in
\gl_\ell(\scA)$. \sm

The equivalence \eqref{Hlem1} is a consequence of $g^{-1} dg = (dg) g^{-1} +
[g^{-1}, dg]$ and $[g^{-1}, dg] \in \lsl_\ell(\scA)$. This shows that $g\in H
\implies g^{-1} \in H$ since
\begin{equation} \label{Hlem2}
  0 = d(g^{-1} g) = (d g^{-1} )g + g^{-1} (dg)
\end{equation}
Given that $\rmE_\ell \in H$, for $H$ to be a subgroup it suffices to show that
$g_1, g_2 \in H \implies g_1g_2 \in H$. But this follows from
\[
   \big( d(g_1g_2)\big) (g_1 g_2)\mn =
(dg_1) g_2 g_2\mn g_1\mn + g_1 (dg_2) g_2\mn g_1\mn = (dg_1)g_1\mn +
\Int(g_1) \big( (dg_2)g_2\mn\big)
\]
since $\Int(g_1)$ stabilizes $\lsl_\ell(\scA)$.  Thus $H$ is a subgroup, and it
will be a normal subgroup as soon as we have shown that $H$ contains any
commutator $\dbl g_1, g_2\dbr$ where $g_1, g_2 \in \GL_\ell(\scA)$. We have
\begin{align*}
&\big(d \dbl g_1, g_2\dbr \big) \dbl g_1, g_2\dbr\mn  =
  d\big( g_1 g_2 g_1\mn g_2\mn \big) \, g_2 g_1 g_2\mn g_1\mn \\
&= (dg_1)\, (g_2 g_1\mn g_2\mn g_2 g_1 g_2\mn g_1\mn)
      + g_1 \big( (dg_2) g_1\mn g_2\mn g_2 g_1 g_2\mn \big) g_1\mn \\
  &\qquad  + g_1g_2 \big( d(g\mn)g_2\mn g_2 g_1\big)g_2\mn g_1\mn
      + g_1 g_2 g_1\mn \big( d(g_2\mn g_2) \big)g_1 g_2\mn g_1\mn  \\
 &= (dg_1) g_1\mn + \Int(g_1) ( (dg_2) g_2\mn )
 + \Int(g_1g_2) \big((dg_1\mn) g_1\big) + \Int(g_1g_2 g_1\mn) \big( (dg_2\mn) g_2\big)
\end{align*}
To proceed, we use \eqref{Hlem0}, thus uniquely writing any $x\in
\gl_\ell(\scA)$ as $x=x_z + x_s$ with $x_z\in \scZ\big(\gl_\ell(\scA)\big)$ and
$x_s\in \lsl_\ell(\scA)$. Decomposing $y\in \gl_\ell(\scA)$ in the same way, we
have
\begin{equation} \label{Hlem3}
 (xy)_z = (yx)_z
\end{equation}
since $xy = yx + [x,y]$ with $[x,y]\in \lsl_\ell(\scA)$. Because $\Int(g)$
stabilizes $\lsl_\ell(\scA)$ and satisfies $\Int(g) (zE_\ell) = zE_\ell$ for
$z\in \scZ(\scA)$ we now get
\begin{align*}
  &\big( (dg_1) g_1\mn\big)_z + \Big(\Int(g_1g_2)\big((dg_1\mn) g_1\big)\Big)_z
  =\big( (dg_1) g_1\mn\big)_z + \Int(g_1g_2)\big((dg_1\mn) g_1\big)_z
\\  & = \big( (dg_1) g_1\mn\big)_z + \big((dg_1\mn) g_1\big)_z
   =\big( (dg_1) g_1\mn\big)_z +\big(g_1 (dg_1\mn) \big)_z
    = \big( d(g_1 g_1\mn)\big)_z =  0,
\end{align*}
thus proving that $(dg_1) g_1\mn + \big(\Int(g_1g_2)\big((dg_1\mn) g_1\big) \in
\lsl_\ell(\scA)$. Similarly
\[
     \Int(g_1) ( (dg_2) g_2\mn ) + \Int(g_1g_2 g_1\mn) \big( (dg_2\mn) g_2\big) \in \lsl_\ell(\scA).
\]
 Hence $\dbl g_1, g_2 \dbr \in H$, and therefore
$\scD\big(\GL_\ell(\scA)\big) \subset H$. \end{proof}

\subsection{Interlaced extensions based on $\lsl_\ell(\scA)$.}
\label{iia} We specialize the setting of \ref{gen-data} to $L=\lsl_\ell(\scA)$
with the aim of constructing a suitable interlaced extension that will allow us
to lift the automorphisms used in conjugacy. Being an interlaced extension, we
need to specify data $(\beta, D, C, \tau)$. \ms

\begin{inparaenum}[(i)]
\item We fix a linear form
\begin{equation} \label{iia1} \eps \co \scA \to k, \quad \eps([\scA,
\scA]) = 0
\end{equation}
and define $\be=\be_\eps \co L \times L \to k$ by
 \begin{equation} \label{iia2}
   \beta_\eps (x,y) = \eps \big(\Tr(xy)\big)
     = \textstyle \sum_{i,j=1}^\ell \eps (x_{ij} y_{ji})
\end{equation}
for $x=(x_{ij}$ and $y=(y_{ij})$. Then $\be$ is an invariant bilinear form on
$L$, and every invariant bilinear form on $L$ is of the type $\be_\eps$ for a
unique linear form $\eps$ satisfying \eqref{iia1}
(\cite[7.10]{n:persp}). \sm

\item We let $D$ be a subalgebra of derivations of $\scA$, which are
    skew with respect to  the bilinear form $(a,b) \mapsto \eps(ab)$,

       \[ D < \SDer_k(\scA),  \]
and let $D$ act on $L$ as in \eqref{gl2}. Then $D$ acts on $L$ by skew
derivations with respect to $\be$.

\item We choose $C\subset D^*$ and $\ta$ as in \eqref{gen-dataiii} and
    \eqref{gen-dataiv} of \ref{gen-data}. \end{inparaenum} \ms

\noindent Using these data we form the interlaced extension
\[
     \IE(L, \be_\eps,  D, C, \tau) =  E = L \oplus C \oplus D.
\]

\subsection{Enlarging interlaced extensions.}\label{vergross}  To suitably enlarge an interlaced
extension $E=\IE(L, D, C)$ with $L=\lsl_\ell(\scA)$ as in \ref{iia}, we embed
$L$ into $L'=\lsl_{\ell+m}(\scA)$, $m\in \NN$ arbitrary, via
\begin{equation} \label{etap1}
    \lsl_\ell(\scA) \to  \lsl_{\ell+m}(\scA), \quad l \mapsto \begin{pmatrix}
  l & 0 \\ 0 & 0 \end{pmatrix}.
\end{equation}
Following the outline of \ref{enl} we next need an invariant bilinear form
$\be'$ on $L'$. We take $\be' = \be'_\eps$ as defined in \eqref{iia2}:
$\be'(x', y') = \eps\big(\Tr(x'y')\big) = \sum_{i,j=1} ^{\ell+m}
\eps(x'_{ij}y'_{ji})$ for $x'=(x'_{ij})$ and $y'=(y'_{ij}) \in
\lsl_{\ell+m}(\scA)$. Then the condition \eqref{enl-ii} of \ref{enl} is
fulfilled: $\be'(l_1, l_2) = \be(l_1, l_2)$ for $l_1, l_2 \in L$. \sm

We also have condition \eqref{enl-iii} of \ref{enl}, i.e., $D$ acts on $L'$
by skew derivations extending the action of $D$ on $L$. Finally,
\ref{enl}\eqref{enl-iv} also holds. Indeed, for $x'$, $y' \in L'$ as before
and $d\in D$, we have
\begin{align*}
  &\si_{D, \be'} (x', y')(d) = \be'( d \cdot x', y')
   = \textstyle  \sum_{i,j=1}^{\ell+m} \; \eps\big( (d\cdot x'_{ij}) y'_{ji})
 \\ & \quad = \textstyle \sum_{i,j=1}^{\ell+m} \be( d\cdot (x'_{ij}E_{12}), y'_{21} E_{21})
  =  \Big(\sum_{i,j=1}^{\ell+m} \si_{D, \be} ( x'_{ij}E_{12}, y'_{ji} E_{12})\Big)(d)
\end{align*}
which shows $\si_{D, \be'}(x', y') \in C$. In sum, we have shown that {\em for
any $m\in \NN$ the interlaced extension $E= \IE(L, \be_\eps, D, C, \tau)$ is a
subalgebra of $E' = \IE(\lsl_{\ell+m}(\scA), \be'_\eps, D, C, \tau)$.} \sm

We are now ready to prove the main result of this section.

\begin{theorem}\label{etap} Let $\scA$ be a unital associative $k$-algebra satisfying $\scA = \scZ(\scA) \oplus [\scA, \scA]$, and let $E=\IE(L,D,
C)$ be an interlaced extension based on $L=\lsl_\ell(\scA)$ as specified in
{\rm \ref{iia}}. Assume that $g\in \GL_\ell(\scA)$ is\/ {\em stably
elementary\/} in the sense that there exists $m\in \NN$ such that
\[
   g' = \begin{pmatrix}
     g & 0 \\ 0 & \rmE_m   \end{pmatrix} \in \EL_{\ell+m}(\scA).
\]
Then the automorphism $\Int(g)$ of $\lsl_\ell(\scA)$ lifts to an automorphism
of $E$.
\end{theorem}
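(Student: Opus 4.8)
The plan is to realize $\Int(g)$ as the restriction to $E$ of a special automorphism of the enlarged interlaced extension $E' = \IE(\lsl_{\ell+m}(\scA), \be'_\eps, D, C, \tau)$ constructed in \ref{vergross}, where $m$ is the integer for which $g' = \left(\begin{smallmatrix} g & 0 \\ 0 & \rmE_m\end{smallmatrix}\right)$ is elementary. Since $g' \in \EL_{\ell+m}(\scA)$, the automorphism $\Int(g')$ of $L' := \lsl_{\ell+m}(\scA)$ is elementary by \eqref{elg1}, so by Proposition \ref{elemlift} it lifts to a special automorphism $f'$ of $E'$ with $f'_{L'} = \Int(g')$. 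Recall that $E$ is a subalgebra of $E'$, and that by \eqref{enl2} the map $f'$ restricts to an automorphism of $E$ precisely when $f'_{L'}(L) = L$ and $\eta'(D) \subset L$. The first condition is immediate: a block computation gives $g' \left(\begin{smallmatrix} l & 0 \\ 0 & 0\end{smallmatrix}\right) g'^{-1} = \left(\begin{smallmatrix} glg^{-1} & 0 \\ 0 & 0\end{smallmatrix}\right)$, so $\Int(g')$ preserves the embedded copy $L = \lsl_\ell(\scA)$ and restricts there to $\Int(g)$. Thus everything will hinge on the second condition, $\eta'(D) \subset L$.

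The heart of the argument will be the identification $\eta'(d) = -(dg')g'^{-1}$ for all $d \in D$. For a single elementary factor $e = \rmE_{\ell+m} + aE_{ij}$ ($i \ne j$), the formulas following Proposition \ref{elemlift} applied to $x = aE_{ij}$ collapse, because $E_{ij}^2 = 0$ forces $(\ad x)(d\cdot x) = 0$; one computes $\eta_e(d) = -(d\cdot x) = -(da)E_{ij} = -(de)e^{-1}$. Writing $g' = e_1\cdots e_r$ as a product of elementary matrices and using the composition rule $\eta_{f_2 \circ f_1} = f_{L,2}\circ \eta_1 + \eta_2$ for special automorphisms, a short induction based on the logarithmic-derivative identity $\big(d(hh')\big)(hh')^{-1} = (dh)h^{-1} + \Int(h)\big((dh')h'^{-1}\big)$ then yields $\eta'(d) = -(dg')g'^{-1}$. (Alternatively, since the hypothesis $\scA = \scZ(\scA)\oplus[\scA,\scA]$ forces $\scZ(\lsl_{\ell+m}(\scA)) = 0$, condition \eqref{autochar3} of Lemma \ref{autchar} together with \eqref{gl1} already determines $\eta'$ uniquely and gives the same value.) Next I would apply Lemma \ref{Hlem} at matrix size $\ell+m$: since $g' \in \EL_{\ell+m}(\scA) \subset \scD(\GL_{\ell+m}(\scA)) \subset H_D$ by \eqref{elg2} and Lemma \ref{Hlem}, we obtain $(dg')g'^{-1} \in \lsl_{\ell+m}(\scA)$ for every $d \in D$. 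Finally, because $d$ annihilates $\rmE_m$, a block computation gives $(dg')g'^{-1} = \left(\begin{smallmatrix}(dg)g^{-1} & 0 \\ 0 & 0\end{smallmatrix}\right)$, whose trace $\Tr\big((dg)g^{-1}\big)$ lies in $[\scA,\scA]$ by the previous step; hence $(dg)g^{-1}\in\lsl_\ell(\scA)$ by \eqref{gl0}, and so $\eta'(d)$ lies in the embedded copy $L$. This establishes $\eta'(D)\subset L$.

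With both conditions of \eqref{enl2} verified, $f := f'\mid_E$ is a special automorphism of $E$, and by the first paragraph its $L$-component is $\Int(g)$; thus $\Int(g)$ lifts to $E$, as claimed. The main obstacle is precisely the explicit identification of the lifting datum $\eta'$: one must recognize that $\eta'$ is the ``logarithmic derivative'' $d \mapsto -(dg')g'^{-1}$, since this is exactly what ties the abstract lifting criterion \eqref{enl2} to the concrete group-theoretic statement of Lemma \ref{Hlem}, namely that a stably elementary (hence commutator) element $g$ satisfies $(dg)g^{-1}\in\lsl_\ell(\scA)$. The passage from $\ell$ to $\ell+m$ matters here: stable elementarity is used only after enlarging, and the reduction back to the top-left block converts membership in $\lsl_{\ell+m}(\scA)$ into membership in $\lsl_\ell(\scA)$ via the trace criterion \eqref{gl0}.
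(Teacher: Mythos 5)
Your proof is correct and shares the paper's skeleton --- embed $L$ in $L'=\lsl_{\ell+m}(\scA)$, enlarge to $E'$, lift $\Int(g')$ by Proposition~\ref{elemlift}, and reduce via \eqref{enl2} to showing $\eta'(D)\subset L$ --- but you settle the key step by a genuinely different mechanism. The paper treats the special lift $f'$ as a black box: from \ref{autchar}\eqref{autochar3} it deduces that $\eta'(d)$ normalizes $L$, hence has block-diagonal form $\diag(\al,\be)$, then solves the block equations \eqref{etap4}--\eqref{etap5} to find $\be=z\rmE_m$ and $\al=z\rmE_\ell-(dg)g\mn$ with an unknown central parameter $z\in\scZ(\scA)$, and only kills $z$ at the very end using Lemma~\ref{Hlem}, the trace, and the hypothesis $\scA=\scZ(\scA)\oplus[\scA,\scA]$. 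You instead compute $\eta'$ outright as the logarithmic derivative $d\mapsto-(dg')g'^{-1}$, from the explicit formulas following Proposition~\ref{elemlift} for a single elementary factor together with a composition rule for special automorphisms (not stated in the paper, but immediate) and induction along a factorization of $g'$. What each route buys: the paper's argument is insensitive to which special lift of $\Int(g')$ one takes, whereas your constructive identification requires choosing $f'$ to be the composition of the factor lifts --- your fallback via uniqueness of $\eta'$ (resting on $\scZ(\lsl_{\ell+m}(\scA))=0$, which does use the hypothesis) is what covers an arbitrary lift. In return, your primary route is leaner than you yourself claim: since $\eta'$ takes values in $L'$ by the definition of a special automorphism, the identification $\eta'(d)=-(dg')g'^{-1}$ already yields $(dg')g'^{-1}\in\lsl_{\ell+m}(\scA)$ for free, so your appeal to Lemma~\ref{Hlem} is redundant there; and in fact the hypothesis $\scA=\scZ(\scA)\oplus[\scA,\scA]$ is never used on that route at all, because the central parameter the paper must eliminate simply never appears. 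Thus your first argument proves the lifting statement for an arbitrary unital associative $\scA$ with the data of \ref{iia}, a genuine (though unneeded for quantum tori, by \eqref{qua-pro33}) strengthening of the theorem.
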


\begin{proof} We embed $L=\lsl_\ell(\scA)$ into $L'= \lsl_{\ell+m}(\scA)$ as in
\eqref{etap1}. We then know that $E$ can be enlarged to an interlaced extension
$E'=\IE(L', D, C)$. Moreover, by \eqref{elg1} and Proposition~\ref{elemlift}
the elementary automorphisms $\Int(g')$ of $L'$ lifts to a special automorphism
$f'$ of $E'$, determined by maps $\Int(g') = f_{L'} \in \Aut_k (L')$ and linear
maps $\eta'\co D \to L'$, $\psi'\co L' \to C$ and $\vphi'\co D \to C$ as in
Lemma~\ref{autchar}. It
will be sufficient to show $f'(E) = E$. Since $f'_{L'}(L) = \Int(g)(L) = L$, it
is in view of \eqref{enl2} enough to prove
\[
   \eta'(d) \in L \qquad \hbox{for all $d\in D$}.
\]

By \ref{autchar}\eqref{autochar3} we have
\begin{equation} \label{etap2}
 d \cdot f_{L'}(l') - f_{L'} (d \cdot l') = [ f_{L'}(l'), \, \eta'(d)]
\end{equation}
for all $d\in D$ and $l'\in L'$. For $l \in L$ we know $f_{L'}(l) =
\Int(g)(l) \in L$ and also $d \cdot f_{L'}(l) - f_{L'} (d \cdot l) \in L$. It
thus follows from \eqref{etap2} for $l'=l\in L$ that  $\eta'(d)$ normalizes
$L$. One easily calculates that then $\eta'(d)$ has the form
\[ \eta'(d) =\begin{pmatrix} \al & 0 \\ 0 & \beta
\end{pmatrix} , \quad \al \in \gl_\ell(\scA), \be \in \gl_m(\scA)\]
(we have suppressed in our notation that $\al$ and $\be$ depend linearly on
$d$). Employing the obvious subdivision for matrices $l'\in L'$,
\[
   l' = \begin{pmatrix} x_1 & x_2 \\ x_3 & x_4 \end{pmatrix}, \quad
    x_1 \in \gl_\ell(\scA),
\]
we get
\[
   f_{L'}(l') = \Int(g')(l') =
      \begin{pmatrix} g x_1 g\mn & g x_2 \\ x_3g \mn & x_4 \end{pmatrix},
\]
whence for $d\in D$ the left hand side of \eqref{etap2} becomes
\[
d \cdot f_{L'}(l') - f_{L'} (d \cdot l') =
\begin{pmatrix} (dg) x_1 g\mn + gx_1 d(g\mn)& (dg) x_2 \\ x_3d(g \mn) & 0\end{pmatrix},
\]
while the right hand side of \eqref{etap2} is
\[
 [ f_{L'}(l'), \, \eta'(d)]
 = \begin{pmatrix} [g x_1 g\mn, \al] & g x_2\be - \al gx_2 \\
             x_3g \mn \al - \be x_3g\mn & [x_4, \be] \end{pmatrix}.
\]
Thus
\begin{align}
(dg) x_2 &=g x_2\be - \al gx_2  \label{etap4} \\
 0&= [x_4, \be] \label{etap5}
\end{align}
Since every $x_4\in \gl_m(\scA)$ is part of some matrix $l'\in L'$, it follows
that \eqref{etap5} holds for all $x_4\in \gl_m(\scA)$. Therefore, by
\eqref{gl1}, \[ \be = z \rmE_m\] for some $x\in \scZ(\scA)$. We substitute this
expression for $\be$ into \eqref{etap4} and obtain $(dg) x_2 = (zg-\al g) x_2$.
Since this holds for all $x_2 \in \Mat_{mn}(\scA)$ we get $dg = zg - \al g$ or
\[ \al = z \rmE_\ell - (dg) g\mn.
\]
Because $g'\in \EL_{\ell+m}(\scA)$ it follows from \eqref{elg2} and
Lemma~\ref{Hlem} that $(dg')(g')\mn \in \lsl_{\ell+m}(\scA)$ for all $d\in D$.
But
\[
    (dg') (g')\mn = \begin{pmatrix} (dg) g\mn & 0 \\ 0 & 0 \end{pmatrix},
\]
so that $(dg) g\mn \in \lsl_\ell(\scA)$ follows. Since $\eta'(d) \in
\lsl_{\ell+m}(\scA)$ we now get \begin{align*}
  \Tr\big( \eta'(d)\big) &= \Tr(\al) + \Tr(\be) =
   \ell z -  \Tr\big( (dg)g\mn\big) + m z \\ &=
   (\ell+m) z -\Tr\big( (dg)g\mn\big) \in[\scA, \scA].
\end{align*}
As $\scA = \scZ(\scA) \oplus [\scA, \scA]$ by assumption and $\Tr\big(
(dg)g\mn\big) \in[\scA, \scA]$, this forces $(\ell+m) z = 0$ so that $z=0$ and
finally $\be = 0$, i.e., $\eta'(d) \in L$ follows.
\end{proof}

\subsection{Quantum tori (review)}\label{qua-pro}
We will later specialize $\scA=\scQ$ to be a quantum torus. Why we do so, is
explained in \ref{link}: $\lsl_\ell(\scQ)$ is then a centreless Lie torus. In
this subsection we review some properties of quantum tori that we will use.
Contrary to the standing assumption for this paper, in this subsection our base
field $k$ can have arbitrary characteristic. We let $\La$ be a free abelian
group of rank $n$. \ms

\begin{inparaenum}[(a)]
\item \label{qua-pro1}(Definition) By definition, a {\em quantum torus (with
    grading  group $\La$)\/} is an associative  unital $\La$-graded
    $k$-algebra   $\scQ= \bigoplus_{\la \in \La} \scQ^\la$ such that

\qquad (QT1) $\dim \scQ^\la \le 1$ for all $\la \in \La$,

\qquad (QT2) every $0 \ne a \in \scQ^\la$ is invertible, and

\qquad (QT3) $\La$ is generated as abelian group by $\{ \la \in \La: \scQ^\la
\ne 0\}$. \sm

Since the invertible elements of an associative algebra form a group, $\{\la
\in \La: \scQ^\la \ne 0 \}$ is a subgroup of $\La$, whence equals $\La$ by
(QT3).  \sm

\item After fixing a basis $\beps= (\eps_i)$ of $\La$, we can choose $0 \ne x_i
    \in \scQ^{\eps_i}$ and then get a quantum matrix $q=(q_{ij}) \in \Mat_n(k)$
    defined by $x_i x_j = q_{ij} x_j x_i$. We recall that $q=(q_{ij}) \in
    \Mat_n(k)$ is called a {\em quantum matrix\/} if $q_{ij} = q_{ji}^{-1}$ and
    $q_{ii} = 1$ for all $1\le i,j\le n$.

    Then, using $x_i^{-1} = $ the
    inverse of $x_i$, we define $x^\la = x_1^{\ell_1} \cdots x_n^{\ell_n}$ for
    $\la = \ell_1 \eps_1 + \cdots + \ell_n \eps_n \in \La$:
\begin{equation}
  \label{qua-pro11} \scQ = \textstyle \bigoplus_{\la \in \La} k x^\la.
\end{equation}
One can then also realize a quantum torus as the unital associative $k$-algebra
presented by generators $x_1, \ldots, x_n , x_1^{-1}, \ldots, x_n^{-1}$ and
relations \[ x_i x_i^{-1} = 1_\scQ = x_i^{-1} x_i, \quad x_i x_j = q_{ij} x_j
x_i.
\]
We will refer to this view of $\scQ$ as a {\em coordinatization}. \sm

\item\label{qua-pro2} The centre of $\scQ$ is a $\La$-graded
    subalgebra,
\[
 \scZ(\scQ) = \textstyle \bigoplus_{\xi  \in \Xi} \scQ^\xi
\]
where $\Xi$ is the so-called {\em central grading group\/}:
\[\Xi = \{ \la \in
\La : \scQ^\la \subset \scZ(\scQ)\}.\] This is a free abelian group of rank
$z\le n$. Hence $\scZ(\scQ)$ is a Laurent polynomial ring in $z$ variables,
which we may take as $t_1, \ldots, t_z$ (these can be taken to be of the form
$x^\la$ for suitable $\la$'s). \sm

\item \label{qua-pro-e} The grading properties of a quantum torus $\scQ$ show
    that     $\scQ$    is    {\em fgc\/} in the sense that $\scQ$ is finitely generated
    as a module over $\scZ(\scQ)$ if and only if $\Xi$  has  finite index in $\La$.
    Equivalently, for some (hence all) coordinatization all entries $q_{ij}$ of the quantum
    matrix $q$ have finite order. If this holds, then for every coordinatization the
    $q_{ij}$ have finite order. \sm

\item\label{qua-pro3}  We define
\[[\scQ,\scQ] = \Span_k \{[a,b]: a,b\in \scQ\},\] a
    graded subspace of $\scQ$. One knows (see e.g.\ \cite[Prop.~2.44(iii)]{bgk}
    for $k=\mathbb{C}$ or \cite[(3.3.2)]{ny} in general)
\begin{equation}\label{qua-pro33}
   \scQ = \scZ(\scQ) \oplus [\scQ,\scQ]. \end{equation}

\item\label{qua-pro-unit} An element $u$ of $\scQ$ is invertible if and only if
    $0\ne u \in \scQ^\la$ for some $\la \in \La$. \sm

\item\label{qua-pro-der} The derivation Lie algebra $\Der_k(\scQ)$ is graded:
    $\Der_k(\scQ) = \bigoplus_{\la \in \La}\, \Der_k(\scQ)^\la$ where
    $\Der_k(\scQ)^\la$ consists of those derivations $d$ satisfying
    $d(\scQ^\mu) \subset \scQ^{\la +\mu}$ for all $\mu \in \La$. The inner derivations
    of $\scQ$ are the maps $\ad q $, given by $\ad(q)(q') =
    qq'-q'q$ for $q,q'\in \scQ$. They form a graded ideal $\IDer \scQ = \{ \ad q: q \in \scQ\}$ of
    $\Der_k(\scQ)$.     As in \ref{lietor-prop}\eqref{lietor-prop-e}, the grading $\scQ =
    \bigoplus_{\la \in \La}\, \scQ^\la$ gives rise to {\em degree
    derivations\/} $\pa_\theta$ of $\scQ$, defined by $\pa_\theta(q) =
    \theta(\la) q$ for $\theta \in \Hom_\ZZ(\La, k)$ and $q \in
    \scQ^\la$. We put $\euD_\scQ= \{\pa_\theta : \theta \in \Hom_\ZZ(\La,k)\}$
    and define
    \[ \CDer(\scQ) = \scZ(\scQ)\, \euD_\scQ = \ts \bigoplus_{\xi \in \Xi}\,
     \scQ^\xi \, \euD_\scQ,
     \] the graded subalgebra of {\em centroidal derivations}. Then
     (\cite[Cor.~2.3]{OP})
          \[ \Der_k(\scQ) = \IDer(\scQ) \rtimes \CDer(\scQ), \quad
          \hbox{so } \ts
          \IDer(\scQ) = \bigoplus_{\la \not\in \Xi}\, \Der_k(\scQ)^\la. \]

Let $\eps \co \scQ \to k$ be the linear form defined by $\eps_(1_\scQ) =
    1$
 and $\eps(\scQ^\la) = 0$ for $\la \ne 0$. The skew-symmetric derivations with
 respect to the bilinear form $(q,q') \mapsto \eps(qq')$ have the following
 description:
\begin{align}
  \SDer(\scQ) &= \SCDer(\scQ) \oplus \IDer(\scQ) \nonumber \\
   \SCDer(\scQ) &= \SDer(\scQ) \cap \CDer(\scQ) = \ts
       \bigoplus_{\xi \in \Xi}\, \SCDer(\scQ)^\xi \quad\hbox{where}\nonumber\\
  \SCDer(\scQ)^\xi &= \scQ^\xi \, \{\pa_\theta : \theta \in \Hom_\ZZ(\La, k),
                            \theta(\xi) = 0 \}. \label{cdrq}
\end{align}
\end{inparaenum}

\subsection{$\lsl_\ell(\scA)$ as Lie torus.} \label{link} In this subsection we
describe for which algebras $\scA$ the Lie algebra $\lsl_\ell(\scA)$ is a Lie
torus as defined in \ref{def:lietor} and identify the data \ref{eala-cons}
necessary to construct an EALA with centreless core $\lsl_\ell(\scA)$. All
un-attributed result can be found in \cite[\S7]{n:persp} or are easily verified
by the reader. We assume $\scA \ne 0$ throughout. \sm

\begin{inparaenum}[(a)]

\item Let $\De$ be the root system of type ${\rm A}_{\ell-1}$, realized as $\De
    = \{ \eps_i - \eps_j, 1\le i,j\le \ell\}$ in standard notation. Then the
    Lie algebra $\lsl_\ell(A)$ has a canonical grading by the root lattice
    $\euQ(\De)$,
\begin{align} \label{link1}
 \lsl_\ell(\scA) &= \ts \bigoplus_{\al \in \De} \, \lsl_\ell(\scA)_{\alp}, \qquad \hbox{for}
     \\
    \lsl_\ell(\scA)_\al &= \begin{cases} \scA\, E_{ij}, & \alp = \eps_i - \eps_j \ne 0, \\
                          \{ x\in \lsl_\ell(\scA): x \hbox{ diagonal}\}, & \alp = 0.
                          \end{cases} \nonumber
\end{align}

\item\label{link-b} Let $e=a E_{ij} \in \lsl_\ell(\scA)$ for $i\ne j$. Then $e$
    is part of an $\lsl_2$-triple $(e, h, f)$ satisfying
$[h,x_\be] = \lan \beta, \alp\ch\ran x_\be$ for all $\beta \in \De$ and $x_\be
\in L_\be$ if and only if $a$ is invertible in $\scA$. In this case $f=a^{-1}
E_{ji}$
    and $h=E_{ii} - E_{jj}$. \sm

\item \label{link-d} Let $\La$ be an abelian group, and let $\scA =
    \bigoplus_{\la \in \La} \scA^\la$ be a $\La$-graded unital associative
    $k$-algebra. Then the $\euQ(\De)$ grading \eqref{link1} of
    $\lsl_\ell(\scA)$ extends to a $(\euQ(\De)\oplus \La)$-grading of $\lsl_\ell(\scA)$,
\[ \lsl_\ell(\scA) = \ts \bigoplus_{\alp \in \De, \, \la \in \La} \,
\lsl_\ell(\scA)^\la_\al
\]
by letting $\lsl_\ell(\scA)^\la_\al$ consist of those matrices, for which all
entries lie in $\scA^\la$. Conversely, a $\euQ(\De) \oplus \La$-grading of
$\lsl_\ell(\scA)$ extending the $\euQ(\De)$-grading \eqref{link1} arises from a
$\La$-grading of the associative algebra $\scA$ as described above. \sm

\item Because of \eqref{link-d}, for $\lsl_\ell(\scA)$ to satisfy the axiom
    (LT1)     of    \ref{def:lietor} with $\euQ(\De)$-grading \eqref{link1} it is necessary and
    sufficient for the associative $k$-algebra $\scA$ to be $\La$-graded.
    Observe that then also (LT2.b) holds since $0 \ne 1_\scA \in \scA^0$ and
    therefore $0 \ne 1_\scA E_{ij} \in \lsl_\ell(\scA)_\al^0$ for $\al= \eps_i
    - \eps_j \ne 0$. Because of \eqref{link-b}, the axiom (LT2.a) holds if and
    only if $\scA= \scQ = \bigoplus_{\la \in \La}\,  \scQ^\la$ is a quantum
    torus.

    Since (LT3) is clear, (LT4) says that $\lsl_\ell(\scQ)$ is a Lie torus of type $(\De,\La)$
    if and only if $\scQ$ is a quantum torus of type $\La$, as defined in
    \ref{qua-pro}\eqref{qua-pro1}. In this case, it follows from \eqref{link-cent} that
    $L$ is fgc as defined in \ref{lietor-prop} if and only if $\scQ$ is an fgc
    quantum torus in the sense of \ref{qua-pro}\eqref{qua-pro-e} -- but we will not
    assume this in the following. \sm

\item In the remainder of this subsection we let $L=\lsl_\ell(\scQ)$ for
    $\scQ$    a    quantum    torus    with    grading group $\La$. Because    of    \eqref{qua-pro33}, the assumption of Lemma~\ref{Hlem} is
    fulfilled. Then \eqref{Hlem0} and \eqref{gl1} imply that $L$ is a
    centreless Lie torus of type $(\De,\La)$. \sm

    Hence, by Theorem~\ref{n:mainconst}, $L$ is centreless core of an EALA obtained
    by the construction~\ref{eala-cons}. We describe the bilinear forms
    $\inpr$ and derivation algebras $D$ allowed in this construction in the next two
    items. \sm

\item Every $\La$-graded invariant symmetric bilinear form $\be$ on $L$ has the
    form \eqref{iia2} where $\eps\co \scQ \to k$ is a linear form vanishing on
    $ \bigoplus_{0 \ne \la }\, \scQ^\la$ and is therefore given by the scalar
    $\eps(1_\scQ)$ which we can assume to be $1\in k$. \sm

\item \label{link-cent}\footnote{ The items \eqref{link-cent} and
    \eqref{link-der}
    are     true for any algebra $\scA$ in place of $\scQ$}
    For    $z\in
    \scZ(\scQ)$     define    $\chi_z    \in    \End_k    \big(
    \Mat_\ell(\scQ)) $ by  $\chi_z \, (x) = (z x_{ij})$ for $x= (x_{ij}) \in \Mat_\ell(\scQ)$.
    Then $\chi_z$ stabilizes $\lsl_\ell(\scQ)$ and defines by restriction a
    centroidal transformation of $\lsl_\ell(\scQ)$. The map
 $\scZ(\scQ) \to \Ctd\big( \lsl_\ell(\scQ) \big)$, $z \mapsto \chi_z$,
 is an isomorphism of $k$-algebras. \sm

\item \label{link-der} For $d\in \Der(\scQ)$ we denote by $\Mat_\ell(d)$ the
    derivation     of
    $\lsl_\ell(\scQ)$ defined in \eqref{gl2}. The maps $d \mapsto \Mat_\ell(d)$ is clearly
    a monomorphism of Lie algebras. Moreover,
    \begin{align*}
      \Der_k\big(\lsl_\ell(\scQ)\big) &= \IDer\big(\lsl_\ell(\scQ)\big)
            + \Mat_\ell\big(\Der(\scQ)\big) \\
      \Mat_\ell\big( \IDer(\scQ)) &= \IDer\big( \lsl_\ell(\scQ)\big) \cap
                \Mat_\ell\big( \Der(\scQ)\big) \simeq \IDer(\scQ) \\
       \CDer\big(\lsl_\ell(\scQ) \big) & = \Mat_\ell\big( \CDer(\scQ)\big)
           \simeq \CDer(\scQ) \\
      \SDer\big(\lsl_\ell(\scQ)\big) &= \IDer\big( \lsl_\ell(\scQ) \big)
             + \Mat_\ell\big( \SDer(\scQ)\big) \\
    \SCDer\big(\lsl_\ell(\scQ)\big)&= \Mat_\ell(\SCDer(\scQ)\big) \simeq \SCDer(\scQ)
    \end{align*}
for $\Der(\scQ)$, $\IDer(\scQ)$, $\CDer(\scQ)$, $\SDer(\scQ)$ and
$\SCDer(\scQ)$ described in \ref{qua-pro}\eqref{qua-pro-der}. Note that the
first three equations above together with \ref{qua-pro}\eqref{qua-pro-der}
prove \eqref{lietor-prop1} for the case $L=\lsl_\ell(\scQ)$. \sm

\item The maximal possible choice for $D$ in the construction \ref{eala-cons}
    is $\SCDer \big(\lsl_\ell(\scQ)\big)$ which we identify with $\SCDer(\scQ)$
    using the isomorphism $\Mat_\ell$ of \eqref{link-der}. For $\scQ=k[x_1^{\pm}, \ldots, x_n^{\pm
    1}]$, $n\ge 2$, a non-zero affine cocycle $\ta$ has been exhibited in \cite[Rem.~3.71]{bgk}.
    It can be described as follows.

    Modulo the isomorphism $\Mat_\ell$ of \eqref{link-der} we identify $\SCDer \big(\lsl_\ell(\scQ)\big)$
    with $\SCDer(\scQ)$. Denoting by $\lan \cdot , \cdot \ran$ the standard inner
    product of $k^n$ and using the natural embedding $\ZZ^n \subset k^n$ we
    can further identify
    \begin{align*}
    \SCDer (\scQ) &= \ts \bigoplus_{\la \in \La = \ZZ^n}\, \SCDer(\scQ)^\la,
    \quad \hbox{where for $\la = (\la_1, \ldots, \la_n) \in \ZZ^n$} \\
      \SCDer(\scQ)^\la &\equiv  \{u =(u_i) \in k^n : \ts \sum_{i=1}^n u_i \la_ i = 0 \} =:D^\la,
       \end{align*}
    cf.\ \eqref{cdrq}.  $u_\al \in D^\al$, $v_\be \in D^\be$ and $w_\ga \in D^\ga$
    define
 \[ \ta(u_\al,  v_\be)\, (w_\ga) = \begin{cases} \al(v)\, \beta(w) \, \ga(u)
                & \hbox{if $\al + \be + \ga = 0$}, \\
                0 & \hbox{otherwise,}\end{cases}
\]
Then $\ta$ is an affine cocycle. It is non-trivial in the sense that the EALAs
associated with $L=\lsl_\ell(\scQ)$, $\ell\ge 3$, $D=\SCDer(L)$ and the two
affine cocycles $\ta$ as above respectively $\ta=0$ are not isomorphic
(\cite[Thm.~5.76]{Kr}.
\end{inparaenum}

\section{Proof of the Main Theorem} \label{sec:promt}

The proof of our main result will be based on the computation of $K$-Theory of
noncommutative (twisted) Laurent polynomial rings due to  D.~Quillen. We
first  briefly recall functors $K_0$ and $K_1$. A nice introduction to the
subject can be found in \cite{R} and  \cite{Wb}.

\subsection{$K_0(\scA)$ and $K_1(\scA)$ for a ring $\scA$.}\label{kkrev}
Let $\scA$ be a ring (unital, but not necessarily commutative). If $P$ is a
(left) $\scA$-module, we denote its isomorphism class by $[P].$ Consider the
free abelian group $FK_0(\scA)$ generated by the set of isomorphism classes of
projective $\scA$-modules of finite type. Then $K_0(\scA)$ is the quotient of
the group $FK_0(\scA)$ by the normal subgroup generated by  the relation
$$ [P] = [P'] + [P'']$$
whenever there exists an exact sequence of $\scA$-modules
$$0 \to P' \to P \to P'' \to 0.$$

As in \ref{elg} we denote by $\GL_\ell(\scA)$, $\ell \in \NN_+$, the group of invertible
$\ell \times \ell$ matrices with entries in $\scA$. For each $m\in \NN_+$ we
have a natural embedding $\GL_\ell(\scA)\hookrightarrow \GL_{\ell + m}(\scA)$
given by
\begin{equation}
X\longrightarrow
\begin{pmatrix}
X & 0\\
0 & \rmE_{\ell + m}
\end{pmatrix}, \label{kkrev1}
\end{equation}
cf.\ \eqref{etap1} for the corresponding embedding on the level of Lie
algebras. We let $\GL_{\infty}(\scA)$ be the direct limit of $\GL_\ell(\scA)$
with respect to  the embeddings \eqref{kkrev1}. Again as in \ref{elg},  we
let $\EL_\ell(\scA)$ be the elementary linear subgroup of $\GL_\ell(\scA)$ and
let $\EL_\infty(\scA)$ be the direct limit of the $\EL_\ell(\scA)$. Then
$$
K_1(\scA)=\GL_\infty(\scA)/[\GL_\infty(\scA),\GL_\infty(\scA)]
   = \GL_\infty(\scA)/\EL_\infty(\scA)
$$
(the first equality is the standard definition of $K_1(\scA)$, while the second
equality is a classical theorem of Whitehead.) \medskip
\begin{remark} The construction of $K_0$ and $K_1$ is functorial on $k$-algebras.
Given a $k$-algebra homomorphism $\eta : \scA \to \scB$ we will denote by
$\eta^*$ the induced group homomorphisms $K_0(\scA) \to K_0(\scB)$ and
$K_1(\scA) \to K_1(\scB).$
\end{remark}

Next we recall the definition of noncommutative Laurent polynomial ring
$\scA_{\phi}[t^{\pm 1}]$. Consider an automorphism $\phi$ of a (unital,
associative and not necessarily commutative) $k$-algebra $\scA$. The
multiplication in $\scA$ will be denoted by juxtaposition. We define a new
unital and associative $k$-algebra $\scA_\phi[t^{\pm 1}]$ as follows. The
underlying $k$-vector space structure is the free left $\scA$-module with basis
$\{t^m\}_{m \in \ZZ}$. The multiplication on $\scA_\phi[t^{\pm 1}]$, which we
will denoted by $\cdot$, is given by
\begin{equation}\label{multiplication}
\Sigma_{i \in \ZZ} \, a_it^i \cdot \Sigma_{j \in \ZZ} \, a'_jt^j
 = \Sigma_{i,j \in \ZZ} \, a_i \phi^i(a'_j)t^{i + j} \quad \text{\rm for all} \,\,\,  a_i, a'_j \in \scA.
\end{equation}
It is known that if $\scA$ is noetherian (resp.\ regular), so  is
$\scA_\phi[t^{\pm 1}]$ (see \cite{arta} Prop. 2.21). We also observe that
$\phi$ induces a natural action $\phi^\ast$ on $K_0(\scA)$ and $K_1(\scA)$.
Namely, if $P$ is a projective $\scA$-module then
$\phi^\ast([P]):=[P\otimes_\phi \scA]$. It is obvious that if $P$ is free or
projective of finite type,  so is $\phi^\ast(P)$. Also, for every matrix
$X=(x_{ij})$ in $\GL_\ell(\scA)$ we let $\phi^\ast(X)=(\phi(x_{ij}))$. This of
course induces an action $\phi^\ast$ on $\GL_\infty(\scA)$  stabilizing
$\EL_\infty(\scA)$, and hence also an action  on $K_1(\scA)$. \medskip

The following result is  due to  D. Quillen \cite[\S 6, page 122]{Q}.

\begin{theorem} \label{qui}
Let $\phi \in \Aut_k(\scA)$. Assume that $\scA$ is noetherian and regular. Let
$\eta : \scA \to \scA_{\phi}[t^{\pm 1}]$ be the canonical embedding of $k$-algebras. Then the  following
sequence of abelian groups
\begin{equation}\label{Q}
\xymatrix{
  K_1(\scA)\ar[r]^<<<<{1 -\phi^*} & K_1(\scA) \ar[r]^{\eta^*} & K_1(\scA_{\phi}[t^{\pm 1}])
         \ar[r]^{\partial}
         & K_0(\scA) \ar[r]^{1 - \phi^*} & K_0(\scA) \ar[r]^{\eta^*} & K_0(\scA_{\phi}[t^{\pm 1}]) \ar[r] & 0
}
\end{equation}
is exact.\footnote{The maps $\phi^*$ and $\eta^*$ have been defined already.
The nature of $\partial$ is explained in Quillen's paper.} \ms
\end{theorem}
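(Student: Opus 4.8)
The plan is to derive the six-term sequence from Quillen's three foundational theorems of higher algebraic $K$-theory --- resolution, dévissage, and localization --- by presenting $\scA_\phi[t^{\pm 1}]$ as a localization of the twisted polynomial ring $\scB = \scA_\phi[t]$, namely $\scA_\phi[t^{\pm 1}] = \scB[t^{-1}]$, the ring obtained by inverting $t$. Since $\scA$ is noetherian and regular, so are $\scB$ and $\scB[t^{-1}]$ by the cited \cite[Prop.~2.21]{arta}, and the resolution theorem identifies $K_i$ of finitely generated projective modules with the $G$-theory ($K'$-theory) of the abelian category $\mathcal{M}(R)$ of \emph{all} finitely generated modules, for each of $R \in \{\scA, \scB, \scB[t^{-1}]\}$. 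I would carry out the computation with these module categories and transport back to $K_0, K_1$ at the very end.

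First I would set up the localization sequence. Let $\mathcal{H} \subset \mathcal{M}(\scB)$ be the Serre subcategory of finitely generated $\scB$-modules on which $t$ acts nilpotently; inverting $t$ identifies the Serre quotient $\mathcal{M}(\scB)/\mathcal{H}$ with $\mathcal{M}(\scB[t^{-1}])$. Quillen's localization theorem then produces a long exact sequence
\[
 \cdots \to K_i(\mathcal{H}) \to K_i(\scB) \to K_i(\scB[t^{-1}]) \xrightarrow{\ \partial\ } K_{i-1}(\mathcal{H}) \to \cdots,
\]
terminating on the right with $K_0(\scB) \to K_0(\scB[t^{-1}]) \to 0$, so that the final $\eta^*$ is automatically surjective. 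Next, dévissage: every object of $\mathcal{H}$ carries a finite filtration whose subquotients are killed by $t$, hence are modules over $\scB/t\scB \cong \scA$, and Quillen's dévissage theorem gives $K_i(\mathcal{H}) \cong K_i(\scA)$. It remains to compute $K_i(\scB)$ and to pin down the resulting map $K_i(\scA) \to K_i(\scB)$.

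For $K_i(\scB)$ I would invoke twisted homotopy invariance (the fundamental theorem for the graded ring $\scB = \bigoplus_{m \ge 0} \scA t^m$ with regular degree-zero part $\scA$): the embedding $\eta : \scA \hookrightarrow \scB$ induces an isomorphism $K_i(\scA) \xrightarrow{\sim} K_i(\scB)$. The crucial point --- where $\phi^*$ enters --- is the identification of the composite $K_i(\scA) \cong K_i(\mathcal{H}) \to K_i(\scB) \cong K_i(\scA)$. For a finitely generated projective $\scA$-module $P$, viewed as the $t$-torsion $\scB$-module on which $t$ acts trivially, right multiplication $\rho_t$ by $t$ on $\scB$ is left $\scB$-linear but only right $\scA$-linear up to $\phi$ (because $ta = \phi(a)t$); tensoring the bimodule resolution $0 \to \scB \xrightarrow{\rho_t} \scB \to \scA \to 0$ with $P$ over $\scA$ therefore yields
\[
 0 \to \scB \otimes_\scA {}^{\phi}\!P \xrightarrow{\ \rho_t \otimes \mathrm{id}\ } \scB \otimes_\scA P \to P \to 0,
\]
where ${}^{\phi}\!P$ denotes $P$ with $\scA$-action precomposed by $\phi$. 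Passing to classes, $[P]$ in $K_0(\scB)$ becomes $[\scB \otimes_\scA P] - [\scB \otimes_\scA {}^{\phi}\!P]$, which under the homotopy-invariance identification maps to $[P] - [{}^{\phi}\!P] = (1 - \phi^*)[P]$. Thus the map $K_i(\mathcal{H}) \to K_i(\scB)$ is $1 - \phi^*$, the composite $K_i(\scA) \cong K_i(\scB) \xrightarrow{\eta^*} K_i(\scB[t^{-1}])$ is the $\eta^*$ induced by the canonical embedding $\scA \hookrightarrow \scA_\phi[t^{\pm 1}]$, and substituting the three identifications into the localization sequence and truncating to $i = 0, 1$ gives exactly the asserted six-term sequence.

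The main obstacle I anticipate is not the formal assembly but the two genuinely $K$-theoretic inputs. The first is twisted homotopy invariance $K_i(\scA_\phi[t]) \cong K_i(\scA)$ for the noncommutative graded ring $\scB$: the classical filtration-by-degree argument for $R[t]$ must be adapted so that the twist by $\phi$ is correctly absorbed. The second is verifying that the identification of the boundary map as $1 - \phi^*$ survives in \emph{higher} degrees --- i.e.\ that the functoriality of dévissage and the twisted resolution above really do assemble $\phi^*$ uniformly, not merely on $K_0$. Since only the $K_0/K_1$ portion is needed here, in practice I would check the map is $1 - \phi^*$ directly on $K_0$ (via the displayed resolution) and on $K_1$ (where it can be verified on explicit invertible matrices and their images under $\phi^*$), sidestepping the full higher-degree bookkeeping.
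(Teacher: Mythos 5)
The paper gives no proof of this theorem at all: it is quoted as a known result of Quillen \cite[\S 6]{Q}, and your outline (localization at the Serre subcategory of $t$-torsion modules, d\'evissage down to $\scA$, the resolution theorem to convert $G$-theory into $K$-theory under the regularity hypothesis, twisted homotopy invariance for $\scA_\phi[t]$, and the characteristic sequence identifying the relevant composite as $1-\phi^*$) is precisely Quillen's own argument, so you have in effect reconstructed the cited proof. Two minor points: with the paper's convention $t\,a=\phi(a)\,t$ one has $a\,t=t\,\phi^{-1}(a)$, so the twist in your bimodule resolution must be ${}^{\phi^{-1}}\!P$ rather than ${}^{\phi}\!P$ --- harmless for exactness, since $1-(\phi^{-1})^*$ and $1-\phi^*$ have the same kernel and image, $(\phi^{-1})^*$ being an automorphism that commutes with $\phi^*$; and the identification of the map $K_i(\mathcal{H})\to K_i(\scA_\phi[t])$ as $1-\phi^*$ in every degree (including the $K_1$ portion you need) follows uniformly from Quillen's additivity theorem applied to your characteristic exact sequence of exact functors, which is cleaner and safer than a matrix-level verification on $K_1$, where the d\'evissage and homotopy-invariance isomorphisms are not given by explicit matrices.
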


We will apply Theorem~\ref{qui} to a quantum torus $\scQ$, Thus, as explained
in \ref{qua-pro}, we can view
$\scQ$ as the  unital associative $k$-algebra presented by generators $x_1,
\ldots, x_n , x_1^{-1}, \ldots, x_n^{-1}$ and relations $ x_i x_i^{-1} = 1_\scQ
= x_i^{-1} x_i$, $x_i x_j = q_{ij} x_j x_i$, where the $q_{ij}$ are non-zero
elements of $k$, $q_{ii}=1$  and $q_{ij}=q_{ji}^{-1}$. For convenience in what
follows we assume that the elements $q_{ij}$ are fixed throughout our
discussion, and write
$$\scQ = k[x_1^{\pm 1}, \cdots, x_n^{\pm 1}].$$
It is immediate from the defining relations that the $k$-vector space $\scQ$ is
a direct sum $\scQ = \bigoplus_{i_1, \ldots, i_n\in \ZZ}\, k x_1^{i_1} \cdots
x_n^{i_n}$.

The quantum torus $\scQ$ contains a subring
\[
 \scQ_{n-1}=k[x_1^{\pm 1},\ldots,x_{n-1}^{\pm 1}]
\]
generated by $x_1^{\pm 1},\ldots,x_{n-1}^{\pm 1}$. Obviously, the conjugation
by $x_n$  stabilizes $\scQ_{n-1}$ and thus induces an automorphism $\phi$
on $\scQ_{n-1}$ so that we may view $\scQ$ as a noncommutative Laurent
polynomial ring $\scQ=\scA_{\phi}[x_n^{\pm 1}]$ where $\scA=\scQ_{n-1}$. The
advantage of realizing $\scQ$ in this form is that it allows us to compute
$K_0(\scQ)$ and $K_1(\scQ)$ by induction on $n.$ We start with computing
$K_0(\scQ)$.

\begin{lemma}\label{K0}
The group $K_0(\scQ)$ is isomorphic to $\mathbb{Z}$. Its generator is the class of a free $\scQ$-module  of rank $1$.
\end{lemma}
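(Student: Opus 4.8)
The plan is to prove the statement by induction on the number $n$ of variables, using Quillen's exact sequence (Theorem~\ref{qui}) applied to the realization $\scQ = \scA_\phi[x_n^{\pm 1}]$ with $\scA = \scQ_{n-1}$ and $\phi = \Int(x_n)\big|_{\scQ_{n-1}}$ introduced above. To invoke Theorem~\ref{qui} I first need $\scA = \scQ_{n-1}$ to be noetherian and regular, and I would establish this along with the main induction: for $n = 0$ we have $\scQ = k$, a field, which is noetherian and regular and satisfies $K_0(k) \cong \ZZ$ generated by $[k]$; and the fact recalled before Theorem~\ref{qui} (namely \cite{arta}, Prop.~2.21) guarantees that noetherianity and regularity pass from $\scA$ to $\scA_\phi[t^{\pm 1}] = \scQ$. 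Thus at each stage Theorem~\ref{qui} is available.

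Granting this, I would extract from the tail of the exact sequence~\eqref{Q} the three-term exactness
\[
 K_0(\scA) \xrightarrow{\,1 - \phi^*\,} K_0(\scA) \xrightarrow{\,\eta^*\,} K_0(\scQ) \to 0,
\]
which identifies $K_0(\scQ)$ with the cokernel of $1 - \phi^* \co K_0(\scA) \to K_0(\scA)$. By the induction hypothesis $K_0(\scA) = K_0(\scQ_{n-1}) \cong \ZZ$, generated by the class $[\scA]$ of the free left module of rank one.

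The crux is to show that $\phi^*$ acts as the identity on $K_0(\scA)$. Since the base-change functor $\phi^*$ preserves free modules and their rank (as recalled above), we have $\phi^*[\scA] = [\scA]$; and as $[\scA]$ generates $K_0(\scA) \cong \ZZ$, this forces $\phi^* = \id$ and hence $1 - \phi^* = 0$. Consequently $\eta^*$ is an isomorphism and
\[
 K_0(\scQ) \cong \operatorname{coker}\big(0 \co K_0(\scA) \to K_0(\scA)\big) = K_0(\scA) \cong \ZZ,
\]
with generator $\eta^*[\scA] = [\scA_\phi[x_n^{\pm 1}]] = [\scQ]$, the class of the free $\scQ$-module of rank one. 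This closes the induction.

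The one genuinely substantive point is the triviality of $\phi^*$ on $K_0$, and this is precisely where the sharp form of the induction hypothesis is used: for a general ring an automorphism may act nontrivially on $K_0$, but once we know $K_0(\scA)$ is infinite cyclic generated by the free rank-one class, any automorphism-induced endomorphism must fix that generator and hence be the identity. I do not expect any other step to present difficulty; the remaining bookkeeping, namely identifying the image of the generator under $\eta^*$ with $[\scQ]$, is immediate from the definitions.
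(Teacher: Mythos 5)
Your proof is correct and follows essentially the same route as the paper: induction on $n$ via Quillen's exact sequence, with the key observation that $\phi^*$ fixes the free rank-one generator of $K_0(\scQ_{n-1})\cong\ZZ$ and hence acts trivially, so that $\eta^*$ is an isomorphism. The only cosmetic differences are that you start the induction at $n=0$ (the field $k$) where the paper starts at $n=1$ (the PID $k[x^{\pm 1}]$), and that you make explicit the noetherian/regular hypothesis needed to invoke Theorem~\ref{qui}, which the paper leaves implicit.
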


This is \cite[Thm.~3.17]{arta}. We include a short proof for the sake of completeness.

\begin{proof}
We reason by induction on $n$. If $n =1 $ then $\scQ = k[x^{\pm 1}]$  is a
commutative Laurent polynomial ring. Since $\scQ$ is then a principal
ideal domain, every projective $\scQ$-module is  free. Our result is then clear.

Assume $n > 1.$ Consider the natural $k$-algebra inclusion $\eta: \scQ_{n-1} \to \scQ_n.$ By induction we may assume that $K_0(\scQ_{n-1})\simeq \mathbb{Z}$. Since $\phi^\ast$ acts trivially
on its generator, it acts trivially on $K_0(\scQ_{n-1})$. From Quillen's exact sequence (\ref{Q}) we see
that the base change map $\eta :K_0(\scQ_{n-1})\to K_0(\scQ)$ is an isomorphism and the result follows.
\end{proof}

We now pass to the computation of the group $K_1(\scQ)$ for a quantum torus
$\scQ$. We first remark that for an arbitrary ring $\scA$ and a unit $u\in
\scA^{\times}$  the $1 \times 1$ matrix $(u)$ is an element of $\GL_1(\scA)$.
Taking the composition of $\scA^{\times}\to \GL_1(\scA)$ with $\GL_1(\scA)\to
\GL_\infty(\scA)\to K_1(\scA)$ we obtain a canonical group homomorphism
$\lambda_\scA:\scA^{\times} \to K_1(\scA)$. In general,  $\la_\scA$ is
neither injective nor surjective, but we will show  that $\lambda_\scQ$ is
surjective when $\scQ$ is a quantum torus.

\begin{proposition}\label{K1}
Let $\scQ$ be a quantum torus. Then  $\lambda_\scQ: \scQ^{\times}\to
K_1(\scQ)$ is surjective.
\end{proposition}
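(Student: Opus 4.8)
The plan is to argue by induction on the number $n$ of variables of $\scQ$, exploiting the realization $\scQ = \scA_\phi[x_n^{\pm 1}]$ with $\scA = \scQ_{n-1}$ together with Quillen's exact sequence (Theorem~\ref{qui}). The induction is anchored at the field $k$ (the case of trivial grading group), where $\lambda_k \colon k^\times \to K_1(k) = k^\times$ is the identity and hence surjective; equivalently one may start at $n=1$, where $\scQ = k[x_1^{\pm 1}]$ is a commutative principal ideal domain and surjectivity of $\lambda_\scQ$ is classical. For the inductive step at level $n\ge 1$ I would set $\scA = \scQ_{n-1}$, assume $\lambda_\scA$ surjective, and deduce the same for $\lambda_\scQ$.

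So fix $n \ge 1$ and view $\scA = \scQ_{n-1}$ inside $\scQ = \scA_\phi[x_n^{\pm 1}]$ via the canonical embedding $\eta$. By induction $\scA$ is obtained from $k$ by iterated twisted Laurent extensions, hence is noetherian and regular (\cite[Prop.~2.21]{arta}), so Theorem~\ref{qui} applies. By Lemma~\ref{K0} we have $K_0(\scA) \cong \ZZ$, generated by the class $[\scA]$ of the free module of rank one, and $\phi^*$ fixes this class; thus $1 - \phi^* = 0$ on $K_0(\scA)$. Substituting this into the sequence \eqref{Q} produces the exact sequence
\[
  K_1(\scA) \xrightarrow{\ \eta^*\ } K_1(\scQ) \xrightarrow{\ \partial\ } K_0(\scA) \to 0,
\]
so that $\partial$ is surjective and $\ker \partial = \Image(\eta^*)$.

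Two inputs then finish the argument. First, the square relating $\lambda_\scA$ and $\lambda_\scQ$ through $\eta$ commutes: for $u \in \scA^\times \subset \scQ^\times$ one has $\eta^*\big(\lambda_\scA(u)\big) = \lambda_\scQ(u)$. Since $\lambda_\scA$ is surjective by the induction hypothesis, $\Image(\eta^*) = \eta^*\big(\lambda_\scA(\scA^\times)\big) = \lambda_\scQ(\scA^\times) \subseteq \Image(\lambda_\scQ)$; hence $\ker \partial \subseteq \Image(\lambda_\scQ)$. Second, the canonical unit $x_n \in \scQ^\times$ satisfies $\partial\big(\lambda_\scQ(x_n)\big) = [\scA]$, the generator of $K_0(\scA) \cong \ZZ$. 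Granting these, surjectivity follows by a one-line chase: given $\xi \in K_1(\scQ)$, write $\partial(\xi) = m[\scA]$; then $\partial\big(\xi - \lambda_\scQ(x_n^m)\big) = 0$, so $\xi - \lambda_\scQ(x_n^m) \in \ker \partial \subseteq \Image(\lambda_\scQ)$, and therefore $\xi \in \Image(\lambda_\scQ)$.

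The step I expect to be the crux is the second input, namely the computation that Quillen's boundary map sends the class of the unit $x_n$ to the free generator $[\scA]$ of $K_0(\scA)$. This is the twisted analogue of the splitting in the Bass--Heller--Swan decomposition, in which the $K_0(\scA)$-summand of $K_1(\scA_\phi[x_n^{\pm 1}])$ is detected precisely by the class of $x_n$; one must check that the twisting by $\phi$ does not disturb this, e.g.\ by tracing through the description of $\partial$ in \cite{Q}. Everything else is formal: the vanishing of $1-\phi^*$ on $K_0(\scA)$, the naturality of $\lambda$, and the final diagram chase.
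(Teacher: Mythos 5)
Your proposal is correct and follows essentially the same route as the paper's own proof: induction on $n$ via the twisted Laurent realization $\scQ=\scA_\phi[x_n^{\pm 1}]$, Quillen's sequence with $1-\phi^*=0$ on $K_0(\scA)$ by Lemma~\ref{K0}, naturality of $\lambda$, and a final diagram chase. The crux you flagged --- that $\partial\big(\lambda_\scQ(x_n)\big)$ generates $K_0(\scA)\cong\ZZ$ --- is handled in the paper exactly as you anticipated, by appeal to the proof of Lemma~5.16 in Quillen's paper rather than by an independent verification.
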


\begin{proof} We argue by induction on $n\in \NN$. In case $n=0$ and $n=1$
it is well-known that $\la_\scQ$ is actually an isomorphism: $K_1(k) \simeq
k^\times$ and $K_1(k[t^{\pm 1}])\simeq k[t^{\pm 1}]^\times$ for any field $k$
by (for example) \cite[Prop.~2.2.2]{R} and \cite[Thm.~2.3.2]{R} respectively,
where both isomorphisms are induced by the determinant. Thus we can assume
$n\ge 2$ in the following.

Consider the sequence (\ref{Q}) with $\scA=\scQ_{n-1}$. We already know,
by Lemma~\ref{K0}, that $\phi^\ast$ acts trivially on $K_0(\scQ_{n-1})$ so that
we have a commutative diagram with an exact horizontal row at the bottom:
$$
\vcenter{
\xymatrix{
  \scQ_{n-1}^{\times}
  \ar[r]^{1 -\phi }\ar[d]^{\lambda_{\scQ_{n-1}}} & \scQ_{n-1}^{\times}  \ar[r]^{\eta}\ar[d]^{\lambda_{\scQ_{n-1}}} & \  \ \scQ^{\times}\ar[d]^{\lambda_\scQ}\\
     K_1(\scQ_{n-1}) \ar[r]^{1 -\phi^*} & K_1(\scQ_{n-1})  \ar[r]^{\eta^*} & K_1(\scQ)
         \ar[r]^{\partial}   & K_0(\scQ_{n-1})   \ar[r]^{} & 0
         }
         }
$$
By induction, $\lambda_{\scQ_{n-1}}$ is surjective. By Lemma~\ref{K0},
$K_0(\scQ_{n-1})\simeq\mathbb{Z}$. Clearly $\scQ^{\times}$ is generated by
$k^{\times}$ and $x_1,\ldots,x_n$,  cf.\ref{qua-pro}\eqref{qua-pro-unit}. It is
shown in the proof  of Lemma 5.16 in \cite{Q} that
$\partial(\lambda_\scQ(x_i))$ is a generator of $K_0(\scQ)\simeq \mathbb{Z}$.
To prove surjectivity of $\la_\scQ$, let $a\in K_1(\scQ)$. Then
$\partial(a)=m\in \ZZ$ and either the element $a-\lambda_Q(x_i^m)$ or  $a + \lambda_Q(x_i^m)$ lies in the kernel
of $\partial$. The claim now follows by a standard diagram chase.
\end{proof}

\begin{remark}A further diagram chase yields more than surjectivity. In fact $K_1(\scQ) = \scQ^\times/[\scQ^\times, \scQ^\times]. $ We do not need this more detailed result for our purposes.
\end{remark}

Interpreted in terms of matrices, Proposition~\ref{K1} yields the following corollary.

\begin{corollary} \label{k1-cor}
Let $\scQ$ be a quantum torus. Let $h\in\GL_\ell(\scQ)$. Then there exists a
nonnegative integer $m$  and a unit $u\in \scQ^{\times}$ such that
the matrix \begin{equation} \label{k1-cor1}
 \begin{pmatrix}
 h & 0\\
 0 &{\rm E}_m
 \end{pmatrix}  \begin{pmatrix}
 u & 0\\
 0 & \rmE_{\ell + m-1}
 \end{pmatrix}
\end{equation}
is contained in $\EL_{\ell+m}(\scQ)$.
 \end{corollary}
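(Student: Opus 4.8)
The plan is to recast the statement as an assertion in $K_1(\scQ)$ and then translate it back to matrices. Recall that $K_1(\scQ) = \GL_\infty(\scQ)/\EL_\infty(\scQ)$ is an \emph{abelian} group and that the stabilization maps \eqref{kkrev1} are compatible with the canonical projection $\GL_\infty(\scQ)\to K_1(\scQ)$. Write $[h]\in K_1(\scQ)$ for the class of $h\in\GL_\ell(\scQ)$. By Proposition~\ref{K1} the homomorphism $\lambda_\scQ\co\scQ^\times\to K_1(\scQ)$ is surjective; since $[h]^{-1}\in K_1(\scQ)$, I would first choose a unit $u\in\scQ^\times$ with $\lambda_\scQ(u)=[h]^{-1}$.

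Next I would identify the matrix \eqref{k1-cor1} as a representative of a product of classes. For each $m$ the first factor $\begin{pmatrix} h & 0\\ 0 & \rmE_m\end{pmatrix}$ is the image of $h$ under $\GL_\ell(\scQ)\hookrightarrow\GL_{\ell+m}(\scQ)$ and so represents $[h]$, while the second factor $\begin{pmatrix} u & 0\\ 0 & \rmE_{\ell+m-1}\end{pmatrix}$ is the image of the $1\times 1$ matrix $(u)\in\GL_1(\scQ)$ and represents $\lambda_\scQ(u)$. Because $\GL_\infty(\scQ)\to K_1(\scQ)$ is a group homomorphism, the product \eqref{k1-cor1} represents $[h]\cdot\lambda_\scQ(u)=[h]\,[h]^{-1}=1$ in $K_1(\scQ)$; equivalently, it lies in the kernel $\EL_\infty(\scQ)$ of the projection.

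The only step needing care is the passage from membership in the direct limit $\EL_\infty(\scQ)=\varinjlim_m\EL_{\ell+m}(\scQ)$ to membership in a fixed finite-level group $\EL_{\ell+m}(\scQ)$. Here I would use that the matrices \eqref{k1-cor1} for varying $m$ form a compatible system under \eqref{kkrev1}: adjoining the diagonal entry $1$ turns the version at level $m$ into the version at level $m+1$. Fixing any initial value, triviality of its $K_1$-class means that its image in some $\GL_N(\scQ)$, $N\ge\ell$, already lies in $\EL_N(\scQ)$; but that image is exactly \eqref{k1-cor1} with $m=N-\ell$. Taking this $m$ then yields \eqref{k1-cor1} $\in\EL_{\ell+m}(\scQ)$, as required. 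Thus the substantive input is Proposition~\ref{K1}, and the main (mild) obstacle is merely the bookkeeping of the direct-limit definitions of $K_1(\scQ)$ and $\EL_\infty(\scQ)$.
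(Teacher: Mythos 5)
Your proposal is correct and is exactly the argument the paper intends: the paper offers no separate proof beyond the phrase ``interpreted in terms of matrices, Proposition~\ref{K1} yields the corollary,'' and your write-up is precisely that interpretation --- choose $u$ with $\lambda_\scQ(u)=[h]^{-1}$, observe that \eqref{k1-cor1} represents $[h]\,\lambda_\scQ(u)=1$ in $K_1(\scQ)=\GL_\infty(\scQ)/\EL_\infty(\scQ)$, and use injectivity of the stabilization maps to descend from $\EL_\infty(\scQ)$ to a finite level $\EL_{\ell+m}(\scQ)$. Your attention to the direct-limit bookkeeping (the matrices \eqref{k1-cor1} for varying $m$ forming a compatible system under \eqref{kkrev1}) is the only nontrivial point, and you handle it correctly.
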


\subsection{Proof of the Main Theorem.} \label{pmt}
To prove the Main Theorem as stated in the introduction, we can assume that the
Cartan subalgebra $H$ of $E$ is such that $H_{cc} = \{ \sum_{i=1}^\ell s_i
E_{ii} : s_i \in k, \, \sum_i s_i = 0\} =: \frh_{\rm st}$ in the notation of
\cite{CNP}. Let $(E, H')$ be a second EALA structure,  and set $H'_{cc} =
\frh$. We then know by the main theorem of \cite{CNP} that there exists $h\in
\GL_\ell(\scQ)$ such that $\Int(h)$ maps $\frh_{\rm st}$  to $\frh$. We now
apply Corollary~\ref{k1-cor} and get $u\in \scQ^\times$ such that the matrix of
\eqref{k1-cor1} is elementary. Put \[
 g =  h \,
       \begin{pmatrix} u & 0\\ 0 & \rmE_{\ell -1 } \end{pmatrix} \in
       \GL_\ell(\scQ).
\]
Then also $\Int(g)$ maps $\frh_{\rm st}$ to $\frh$ (\cite[Lemma~2.10]{CNP}).
Moreover, \[
\begin{pmatrix}  g & 0 \\ 0 & \rmE_m \end{pmatrix} = g'
\]
is elementary. Because of \eqref{qua-pro33} we can now apply Theorem~\ref{etap}
and obtain that $\Int(g)$ lifts to an automorphism of $E$. This finishes the
proof. \hfill$\square$

\end{document}